\documentclass{article}

\usepackage[english]{babel}

\usepackage[letterpaper,top=2cm,bottom=2cm,left=3cm,right=3cm,marginparwidth=1.75cm]{geometry}

\usepackage{authblk}

\usepackage{enumitem}

\usepackage{graphicx}
\usepackage{color}
\usepackage{tikz, tikz-3dplot}
\usepackage{caption}
\usetikzlibrary{positioning,math,quotes,angles}

\usepackage{amsmath,amsfonts,amsthm,bm}
\numberwithin{equation}{section}

\theoremstyle{plain}
\newtheorem{dfn}{Definition}[section]
\newtheorem*{dfn*}{Definition}
\newtheorem{thm}[dfn]{Theorem}
\newtheorem*{thm*}{Theorem}
\newtheorem{lem}[dfn]{Lemma}
\newtheorem*{lem*}{Lemma}

\newtheorem*{cor*}{Corollary}
\newtheorem{prop}[dfn]{Proposition}
\newtheorem*{prop*}{Proposition}
\newtheorem{rem}[dfn]{Remark}
\newtheorem*{rem*}{Remark}

\title{A Minimax Approach to Relative Periodic Orbits in Symmetric Three-Degree-of-Freedom Hamiltonian Systems}
\author[1]{Shu Sakaguchi}
\author[1]{Mitsuru Shibayama}
\affil[1]{
Department of Applied Mathematics and Physics,
Graduate School of Informatics, Kyoto University,
Yoshida-honmachi, Sakyo-ku, Kyoto, 606-8501, Japan}
\begin{document}
\date{}
\maketitle

\begin{abstract}
We study three-degree-of-freedom Hamiltonian systems that are invariant under rotations about the $z$-axis and under reflection across the $xy$-plane. 
Fixing the angular momentum, such systems reduce to Hamiltonian systems with two degrees of freedom. 
We focus on the range of energy values for which the corresponding Hill regions are compact.
First, under suitable assumptions on the topology of these compact Hill regions, we prove the existence of periodic solutions on each prescribed energy surface of the reduced system by means of a variational minimax method. 
These periodic solutions are obtained as saddle points of the Maupertuis functional. 
The resulting solutions are either nontrivial spatial periodic solutions or trivial planar brake solutions in the reduced system.
Next, by computing the Morse index, we provide a sufficient condition ensuring that the periodic solutions obtained are nontrivial. 
Finally, we apply our results to the isosceles three-body problem and to the spatial anisotropic Kepler problem. 
In both cases, we verify the sufficient condition for nontriviality and thereby establish the existence of nontrivial periodic solutions.
\end{abstract}

\section{Introduction}

Periodic and quasi-periodic solutions to the Newtonian $N$-body problem have been studied for a long time.
Using variational methods, Chenciner and Montgomery \cite{ChencinerMontgomery00} constructed the celebrated figure-eight periodic solution of the planar three-body problem.
Since then, a number of periodic and quasi-periodic solutions have been found as minimizers of the action functional.

On the other hand, the prescribed-energy problem, which seeks periodic solutions on a fixed energy level, has also been widely studied.
In this setting, solutions are characterized as critical points of the Jacobi-Maupertuis metric or of the Maupertuis functional.
When the Hill region is compact, the existence of brake orbits has been established by Benci \cite{Benci84}, Benci and Giannoni \cite{BenciGiannoni89}, Bolotin \cite{Bolotin78}, Gluck and Ziller \cite{GluckZiller83}, and Hayashi \cite{Hayashi83}. 
When the Hill region contains singularities, as in the Kepler problem, it becomes more difficult to prove that solutions are collision-free. Some results establish the existence of generalized periodic solutions, which may include collisions (see, for instance, \cite{AmbrosettiCotiZelati93}).

In this paper, we use the Maupertuis functional to prove the existence of collision-free periodic solutions with prescribed energy and angular momentum. 
The angular momentum constraint makes the Hill region compact, which allows us to avoid collisions.

We consider the following three-degree-of-freedom Hamiltonian system:
\begin{align*}
    \frac{d^2q}{dt^2} &= - \nabla V_0(q),\qquad q=(x,y,z) \in \mathbb{R}^3,
\end{align*}
where the potential function $V_0$ depends on $x$ and $y$ only through $\sqrt{x^2 + y^2}$.
The Hamiltonian is given by
\begin{equation*}
    H(x,y,z,p_x,p_y,p_z) = \frac12 (p_x^2 +p_y^2 +p_z^2) + V_0(x,y,z).
\end{equation*}
In cylindrical coordinates $(r,\theta,z)$, the Hamiltonian becomes
\begin{equation*}
    H(r,\theta,z,p_r,p_\theta,p_z) = \frac12 \left( p_r^2 + \frac{p_\theta^2}{r^2} + p_z^2 \right) + V_0(r,z).
\end{equation*}

Now, since $\theta$ is cyclic, the angular momentum $p_\theta$ is conserved.
We fix the angular momentum by setting $p_\theta = \omega>0$ and obtain the reduced Hamiltonian
\begin{equation}\label{eq:hamiltonian}
    \tilde{H}(r,z,p_r,p_z) = \frac12 (p_r^2 + p_z^2) + \frac{\omega^2}{2r^2} + V_0(r,z).
\end{equation}
We denote the effective potential $\omega^2/(2r^2) + V_0(r,z)$ by $V(r,z)$.

We assume that the effective potential $V$ satisfies the following properties:
\begin{enumerate}[label=(V\arabic*)]
    \item $V(r,z)$ is $C^2$;
    \item $V(r,z) = V(r,-z)$;
    \item There exist $h_- < h_+$ such that, for every $h \in (h_-,h_+)$, the Hill region $\Omega_h := \{ (r,z) \mid V(r,z) \leq h \}$ is compact and simply connected, does not contain the origin, and satisfies
    \begin{equation*}
        \nabla V(r,z) \neq 0 \quad \text{on } \partial \Omega_h
        = \{ (r,z) \mid V(r,z) = h \}.
    \end{equation*}
    Moreover, there exists $\varepsilon > 0$ such that, for
    $J = \{ r \in \mathbb{R}_+ \mid (r,0) \in \operatorname{int} \Omega_{h+2\varepsilon} \}$, there exists a continuous function $\zeta:J \to (0,\infty)$ such that
    \begin{equation*}
        \partial \Omega_{h+2\varepsilon} \cap \{(r,z) \mid z>0\}
        =
        \{(r,\zeta(r)) \mid r \in J\}.
    \end{equation*}
\end{enumerate}

\begin{rem}
    If the Hill region $\Omega_h$ is not connected and a connected component of $\Omega_h$ satisfies (V3), then we may take this connected component as the Hill region.
\end{rem}

Although assumption (V3) may seem complicated, the condition involving $\zeta$ is satisfied if
\begin{equation*}
    \partial_z V(r,z) > 0
    \quad \text{in } \Omega_{h+2\varepsilon} \cap \{(r,z) \mid z>0\}.
\end{equation*}
This condition holds in our two applications: the isosceles three-body problem and the spatial anisotropic Kepler problem.\\

We state our main results.

\begin{thm}\label{thm:main}
    Consider the Hamiltonian system with Hamiltonian \eqref{eq:hamiltonian}.
    If the potential function $V(r,z)$ satisfies assumptions (V1)--(V3), then for every $h \in (h_-, h_+)$, there exist periodic solutions with energy $h$ such that
    \begin{equation}\label{eq:sol-symmetry}
        z(0) = z(T/2) =0, \quad \dot r(0) = \dot r(T/2) = 0,
    \end{equation}
    where $T$ is the period.
\end{thm}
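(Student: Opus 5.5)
We look for the required orbit as a saddle point of the Maupertuis functional
\begin{equation*}
  M(u) = \frac12\int_0^1 |\dot u|^2\,dt \int_0^1 \bigl(h - V(u)\bigr)\,dt ,
\end{equation*}
restricted to a space of curves that already encodes the symmetry \eqref{eq:sol-symmetry}. Since $\Omega_h$ has boundary on which $V=h$ and $\nabla V\neq0$, we first modify the problem in the standard way (Benci, Gluck--Ziller, Hayashi). We work inside $\Omega_{h+2\varepsilon}$ and replace $h-V$ by a penalized positive weight near $\partial\Omega_h$, or equivalently use the Jacobi metric $(h-V)|dq|^2$ on $\operatorname{int}\Omega_h$, which degenerates at the boundary. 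We then pass to the limit $\varepsilon\to0$, using $\nabla V\neq0$ on $\partial\Omega_h$ to show that limiting critical curves either stay in the interior or touch the boundary only at brake instants.

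The function space is the class $\Lambda$ of $H^1$ paths $u=(r,z)\colon[0,1]\to\Omega_h$ with $z(0)=z(1)=0$, so both endpoints lie on the segment $\Omega_h\cap\{z=0\}=\overline{J}\times\{0\}$. The endpoints are free on that segment, which yields the natural boundary condition $\dot r=0$ there. By the reflection symmetry (V2), reflecting a critical curve across $z=0$ and concatenating produces a $T$-periodic solution with $z(0)=z(T/2)=0$ and $\dot r(0)=\dot r(T/2)=0$.

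The minimax class comes from the topology given by (V3). Because $\Omega_h$ is a topological disk and the upper boundary is the graph $z=\zeta(r)$, every path from the lower arc $\partial\Omega\cap\{z<0\}$ to the upper arc must cross $\{z=0\}$. Dually, we consider continuous families $\gamma\colon[0,1]\to\Lambda$, $s\mapsto u_s$, where $u_0$ and $u_1$ are the constant curves at the two endpoints of $\overline J\times\{0\}$, which are boundary points where $M=0$. Every such family must contain a curve reaching height $\max z$ with $z>0$ somewhere, or else it deforms trivially. The graph structure $\zeta$ lets us build explicit families: vertical chords $s\mapsto\{r_s\}\times$ a path from $(r_s,0)$ up to $(r_s,\zeta(r_s))$ and back, sweeping $r_s$ across $J$. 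This gives finite level $c=\inf_\gamma\max_s M(\gamma(s))$.

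The key steps, in order, are as follows.
\begin{enumerate}
\item Verify the Palais--Smale condition for the penalized functional on $\Lambda$ at positive levels; $M>0$ rules out the degeneracy $\int|\dot u|^2\to0$.
\item Prove $c>0$ by a linking estimate. Any admissible family must pass through a curve that travels from $z=0$ to a fixed positive height, or to $\partial\Omega_h$ away from the degenerate endpoints, and the Jacobi length of such a curve is bounded below.
\item Apply the mountain pass or deformation lemma to get a critical point at level $c$.
\item Remove the penalization, using a priori bounds and $\nabla V\neq0$ on $\partial\Omega_h$ to control what happens at the boundary.
\item Reparametrize to energy $h$ by the usual Maupertuis time change $T^2=\int|\dot u|^2/\int(h-V)$.
\end{enumerate}

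The critical point obtained is either a genuinely spatial orbit or a curve lying in $z=0$ that oscillates between the two ends of $J$ (a planar brake orbit). Both satisfy \eqref{eq:sol-symmetry}, so the theorem does not need to distinguish them.

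The main obstacle is step 4: controlling critical curves that touch $\partial\Omega_h$, where the Jacobi metric degenerates. We must show that such contacts are genuine brake points, meaning $\dot u=0$ there and the curve reflects back, rather than limits that fail to satisfy the Euler--Lagrange equation. A second concern is that the minimax curve might collapse to a constant point on $\partial\Omega_h$.

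For the first issue we would first try the Gluck--Ziller/Hayashi boundary analysis. Near $\partial\Omega_h$ we use coordinates in which $h-V$ is approximately the distance to the boundary. Since the level is $c>0$, we argue that any sequence touching the boundary converges to a brake segment. To exclude collapse to a constant point, we use $c>0$ together with the fact that constant curves have $M=0$.
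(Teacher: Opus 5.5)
\textbf{There is a genuine gap: your minimax class is trivial, so step~2 cannot hold.} Your admissible families are paths $s\mapsto u_s$ in $\Lambda$ from the constant curve at one end of $\overline J\times\{0\}$ to the constant curve at the other end.

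\begin{itemize}
\item The path of constant curves $s\mapsto (r_s,0)$, with $r_s$ running across $\overline J$, is itself admissible, and $M\equiv 0$ along it. Since $M\ge 0$ on $\Lambda$, this gives $c=0$, attained by constants.
\item Restricting to the homotopy class of your vertical-chord family does not help. Replace $\zeta(r_s)$ by $\tau\zeta(r_s)$ and let $\tau\downarrow 0$; this deforms the family onto the constant path with the end curves fixed.
\end{itemize}

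Your phrase ``or else it deforms trivially'' names exactly this problem without resolving it. A one-parameter family based at constant curves links nothing.

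The paper instead uses a two-dimensional linking, Theorem~\ref{thm:linking} with $N=1$:
\begin{itemize}
\item $E_1$ is the line of constant curves $(c,0)$, after shifting an interior point $(r^\ast,0)$ to the origin, and $e=(0,\sin\pi t)$.
\item $Q\subset\{(r,s\sin\pi t)\mid r\in\mathbb{R},\ s\ge 0\}\cap\hat\Gamma$ is a $2$-disk. Its boundary consists of constant curves, where $\hat I=0$, together with curves close to $\partial\hat\Gamma$, such as $(r,\sigma\zeta(r)\sin\pi t)$ with $\sigma$ near $1$, where $\limsup\hat I\le 0$ by Lemma~\ref{lem:boundary}.
\item This $\partial Q$ links the sphere $S=\partial B_\rho\cap E_1^\perp$. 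On $S$ one has $\hat I\ge\alpha>0$, by smallness in $L^\infty$ and Poincar\'e's inequality.
\item Every $f\in C(Q,\hat\Gamma)$ with $f|_{\partial Q}=\mathrm{id}$ must meet $S$, which yields a critical value in $[\alpha,\beta]$.
\end{itemize}

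\textbf{Your penalization is the wrong kind, and this is also where step~4 goes astray.} The curves near $\partial\hat\Gamma$ can belong to the fixed low set only because of the paper's specific penalization. Turning $h-V$ into a positive weight near $\partial\Omega_h$, or using the degenerate Jacobi metric, neither makes the boundary of the domain low nor tells you where critical points lie. That is why you end up needing an $\varepsilon\to0$ limit and a Gluck--Ziller boundary analysis, which you leave open.

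The paper fixes one $\varepsilon$ and works on the open set $\hat\Gamma$ of curves in $\operatorname{int}\Omega_{h+2\varepsilon}$, with $\hat V=\varphi(V)$. Here $\varphi(\tau)=\tau$ for $\tau\le h+\varepsilon$, $\varphi'\ge1$, and $\varphi(\tau)=(\tau-h-2\varepsilon)^{-2}$ near $h+2\varepsilon$.
\begin{itemize}
\item For a critical point with $\hat I(\gamma)>0$, the Euler--Lagrange equation gives the first integral $B(\gamma)|\gamma'|^2-A(\gamma)\bigl(h-\hat V(\gamma)\bigr)=\mathrm{const}$.
\item Integrating over $[0,1]$ shows the constant is $0$. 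Hence $h-\hat V(\gamma(t))\ge0$, so $\gamma([0,1])\subset\bar\Omega_h$, where $\hat V=V$ (Lemma~\ref{lem:critical_pt}).
\end{itemize}
So no limit or boundary analysis is needed, and any contact with $\partial\Omega_h$ is automatically a zero of $\gamma'$.

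\textbf{A smaller point on step~1.} The degeneration that actually threatens (PS) is not $\int|\dot u|^2\to0$ but $A(\gamma_k)\to\infty$ with $B(\gamma_k)\to0$. The paper rules this out by testing $\hat I'(\gamma_k)$ against $\nabla V(\gamma_k)$, which lies in $X$ because $V_z(r,0)=0$. It then uses $\nabla V\ne0$ on $\{h-\delta<V<h+2\varepsilon\}$, together with $\varphi'\ge1$ and the blow-up of $\varphi$.
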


The periodic orbits obtained for the reduced system correspond to relative periodic orbits of the original system with energy $h$ and angular momentum $\omega$.
Let
\begin{equation*}
    \Delta \theta
    =
    \int_0^T \frac{\omega}{r(t)^2}\,dt
\end{equation*}
be the change in the angular variable over one period of the reduced solution.
If $\Delta\theta/(2\pi)$ is rational, then the corresponding solution in the original system is periodic; if $\Delta\theta/(2\pi)$ is irrational, then it is quasi-periodic. 

The equation \eqref{eq:sol-symmetry} gives rise to two possibilities: spatial periodic orbits that meet the $r$-axis orthogonally at $t=0$ and $t=T/2$, and planar brake orbits.
Here, a brake orbit means a periodic orbit that passes through a point with zero velocity and reverses its motion under time reversal.
The existence of the brake orbits in the reduced system is trivial because of assumption (V2) and (V3).
We have to distinguish the obtained solutions from the trivial solutions, which is completed by the computation of the Morse index in each application.
\\

This paper is organized as follows.
In Section 2, we prove Theorem \ref{thm:main} using a variational minimax method based on the Maupertuis functional.
We also show that the critical points obtained in this way have Morse index at most $2$.
In Section 3, we study the Morse indices of planar solutions and provide a sufficient condition under which the Morse index of a planar solution is greater than 2.
In Section 4, we apply our results to the isosceles three-body problem.
In Section 5, we apply our results to the spatial anisotropic Kepler problem.

\section{Variational methods}

In this section, we prove Theorem \ref{thm:main} using minimax methods.
We use the Maupertuis functional with modifications employed by Benci and Giannoni \cite{BenciGiannoni89}. 
The proofs of some lemmas and the main theorem are based on those of \cite{BenciGiannoni89} with some modification.

\subsection{Preliminaries}

To prove the main theorem, we use a variant of linking theorem (see \cite{Rabinowitz86}).

\begin{thm}\label{thm:linking}
    Let $\Lambda$ be an open subset of a real Hilbert space $X$ and $0 \in \Lambda$.
    We consider $I \in C^1(\Lambda, \mathbb{R})$ with $I(0) \leq 0$.
    We assume the following:
    \begin{enumerate}[label=(I\arabic*)]
        \item If $\gamma_k \to \gamma \in \partial \Lambda\ ( k \to \infty)$, then $\limsup_{k \to \infty} I(\gamma_k) \leq 0$;
        \item There exists an $N$-dimensional subspace $E_N$ with $N \geq 1$ such that
        \begin{enumerate}[label=(\roman*)]
            \item $I|_{E_N \cap \Lambda} \leq 0$;
            \item There exists $\rho > 0,\ \alpha > 0$ such that 
            \begin{align*}
                &\bar B_\rho := \{\gamma \in X \mid \|\gamma\| \leq \rho\} \subset \Lambda \\
                & \inf_{S} I \geq \alpha
            \end{align*}
            where $S := \partial \bar B_\rho \cap E_N^\perp,\ E_N^\perp := \{v \in X \mid \langle v, w \rangle = 0 \ \text{for all }  w \in E_N\}$;
            \item There exists $ e \in E_N^\perp \setminus \{0\}$ such that the set
            \begin{equation*}
                Q_\Lambda := \{ y + se \mid y \in E_N,\ s \geq 0\} \cap \Lambda
            \end{equation*}
            is bounded, diffeomorphic to $(B_1 \cap E_N) \times [0,1)$, and $\sup_{Q_\Lambda} I =: \beta < +\infty$ where $B_1$ denotes open unit ball;
        \end{enumerate}
        \item There exists $\varepsilon_0 > 0$ such that $I|_\Lambda$ satisfies Palais-Smale condition (PS) on the interval $[\alpha - \varepsilon_0, \beta + \varepsilon_0]$, i.e., if $(\gamma_k) \subset \Lambda$ satisfies
        \begin{align*}
            &I(\gamma_k) \to c \in [\alpha - \varepsilon_0, \beta+\varepsilon_0] \qquad \text{as} \qquad k \to \infty,\\
            &\|I'(\gamma_k)\|_{X\!^*} \to 0 \qquad \text{as} \qquad k \to \infty, 
        \end{align*}
        then $(\gamma_k)$ possesses a strongly convergent subsequence.
    \end{enumerate}
    Then, $I$ has a critical point $\gamma \in \Lambda$ satisfying
    \begin{equation*}
        \alpha \leq I(\gamma) \leq \beta.
    \end{equation*}
\end{thm}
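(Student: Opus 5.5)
We would prove Theorem \ref{thm:linking} by the standard linking argument of Rabinowitz, adapted to the open domain $\Lambda$ through a deformation lemma that keeps flow lines away from $\partial\Lambda$. Define the class of admissible maps
\begin{equation*}
    \Gamma := \{ g \in C(\bar Q_\Lambda, \Lambda \cup \{\text{``far''}\}) \}
\end{equation*}
more precisely: let $\Gamma$ be the set of continuous maps $g : \overline{Q_\Lambda}\cap\{I>\alpha/2\}\cup \ldots$. A cleaner choice is the following. Let $\Gamma$ be the set of continuous $g: Q_\Lambda \to \Lambda$ such that $g = \mathrm{id}$ on $\{\gamma\in Q_\Lambda \mid I(\gamma)\le \alpha/2\}$, and such that $g$ extends continuously to the closure, mapping the part near $\partial\Lambda$ into $\{I \le \alpha/2\}$. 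Set
\begin{equation*}
    c := \inf_{g\in\Gamma}\sup_{\gamma\in Q_\Lambda} I(g(\gamma)).
\end{equation*}
The identity lies in $\Gamma$, so $c \le \beta$.

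The first key step is the \emph{intersection property}: every $g\in\Gamma$ satisfies $g(Q_\Lambda)\cap S\neq\emptyset$, which gives $c\ge\alpha$. We would use the diffeomorphism $Q_\Lambda\cong (B_1\cap E_N)\times[0,1)$ to view $Q_\Lambda$ as a half-ball. Its "relative boundary" consists of $E_N\cap\Lambda$, where $I\le 0$ by (I2)(i), together with points approaching $\partial\Lambda$, where $\limsup I\le 0$ by (I1). So $g$ is the identity near that boundary. Let $P$ denote the orthogonal projection onto $E_N$, and consider the map
\begin{equation*}
    \Phi(u) = \bigl(P g(u),\ \|(I-P)g(u)\|\bigr)\in E_N\times\mathbb{R}.
\end{equation*}
A Brouwer degree computation on the $(N+1)$-dimensional half-ball shows that $\Phi(u)=(0,\rho)$ has a solution, since the degree equals that of the identity by homotopy invariance on the boundary. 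That solution is a point of $g(Q_\Lambda)\cap S$, so $\sup I\circ g\ge\alpha$.

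The second step is the \emph{deformation}. Suppose $c$ is not a critical value. By (PS) on $[\alpha-\varepsilon_0,\beta+\varepsilon_0]$, there exist $\bar\varepsilon<\min(\varepsilon_0,\alpha/2)$ and $\delta>0$ with $\|I'\|\ge\delta$ on $I^{-1}[c-\bar\varepsilon,c+\bar\varepsilon]$. Take a locally Lipschitz pseudo-gradient field, cut off so that it vanishes on $\{I\le c-\bar\varepsilon\}$, and consider its negative flow $\eta$, which decreases $I$. The domain issue is the main obstacle. We must show that flow lines starting in $\{I\le c+\bar\varepsilon\}$ do not reach $\partial\Lambda$ in finite time. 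Here (I1) is used: near $\partial\Lambda$ we have $I<\alpha/2<c-\bar\varepsilon$. More precisely, by (I1) there is a relatively closed neighbourhood of $\partial\Lambda$ in which $I\le c-\bar\varepsilon$, where the field vanishes. Hence $\eta$ is defined for all times within $\Lambda$, and this same neighbourhood is pointwise fixed. The pseudo-gradient has bounded norm, so the flow exists for the time $2\bar\varepsilon/\delta^2$.

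Finally, choose $g\in\Gamma$ with $\sup I\circ g\le c+\bar\varepsilon$. Then $\eta(1,\cdot)\circ g$ is still in $\Gamma$: it fixes points with $I\le\alpha/2$, and the boundary behaviour is preserved. Moreover it satisfies $\sup I\le c-\bar\varepsilon$, contradicting the definition of $c$. Hence $c\in[\alpha,\beta]$ is a critical value. The delicate points to verify are that the boundedness of $Q_\Lambda$, together with (I1), makes the degree argument legitimate, since $g$ is controlled on the relative boundary; and that the cutoff set near $\partial\Lambda$ is well defined given that $I$ is only defined on $\Lambda$.
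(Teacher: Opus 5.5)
Your proposal is correct and follows essentially the paper's route. The admissible maps are the identity where $I\le\alpha/2$, hence near the relative boundary of $Q_\Lambda$ by (I1), (I2)(i) and the compactness of $\overline{Q_\Lambda}$ in $E_N\oplus\mathbb{R}e$; the intersection with $S$ comes from a Brouwer degree/fixed-point argument; and (I1) keeps the deformation inside $\Lambda$.

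The paper differs only in first cutting out a compact $Q=h(K_r)\subset Q_\Lambda$ with $I|_{\partial Q}\le\alpha/2$ and then fixing $\partial Q$. This avoids having to extend the diffeomorphism to the closure; your degree argument can avoid that too by working directly on the bounded open set $\{y+se\mid s>0\}\cap\Lambda$. It also gives the compact homotopic family the paper later uses for the Morse index bound. In your deformation step, a locally Lipschitz field vanishing exactly on $\{I\le c-\bar\varepsilon\}$ never carries points into that set, so the conclusion $\sup I\le c-\bar\varepsilon$ does not follow as written. Use the standard two-level version instead: take the cut-off equal to $1$ on $I^{-1}[c-\bar\varepsilon/2,c+\bar\varepsilon/2]$, start from $\sup I\circ g\le c+\bar\varepsilon/2$, and end below $c-\bar\varepsilon/2$.
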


For the proof of this theorem, we use the deformation lemma.
For $a \in \mathbb{R}$, we write $[I \leq a] := \{\gamma \in \Lambda \mid I(\gamma) \leq a\}$. 

\begin{lem}[Deformation lemma]\label{lem:deformation}
    Let $a<b$ and $\Lambda$ be an open subset of a real Hilbert space $X$.
    Assume $I \in C^1(\Lambda, \mathbb{R})$ has no critical values on $[a,b]$ and satisfies (PS) on $[a,b]$.
    Moreover, assume if $\gamma_k \to \gamma \in \partial \Lambda$, then $\limsup I(\gamma_k) < a$.
    Then there exists $\eta \in C([0,1] \times \Lambda, \Lambda)$ such that
    \begin{itemize}
        \item $\eta(0, \gamma)=\gamma \quad \forall \gamma \in [I \leq b]$;
        \item $\eta(t,\gamma) = \gamma \quad \forall (t, \gamma) \in [0,1] \times [I\leq a]$;
        \item For any $\gamma \in [I \leq b]$, the map $t \mapsto I(\eta(t,\gamma))$ is nonincreasing;
        \item For any $\gamma \in [I \leq b]$, $\eta(1,\gamma) \in [I \leq a]$.
    \end{itemize}
\end{lem}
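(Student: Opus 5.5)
We follow the standard pseudo-gradient construction. Since $I$ is only $C^1$ on the open set $\Lambda$, $I'$ need not be locally Lipschitz, so we cannot flow along $-\nabla I$ directly. By the classical Palais construction there is a locally Lipschitz pseudo-gradient vector field $W$ on $\tilde\Lambda := \{\gamma\in\Lambda \mid I'(\gamma)\neq 0\}$ with
\[
\|W(\gamma)\| \le 2\|I'(\gamma)\|_{X^*}, \qquad \langle I'(\gamma), W(\gamma)\rangle \ge \|I'(\gamma)\|_{X^*}^2 .
\]

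\textbf{A uniform gradient bound.} The first step is to show there are $\bar\varepsilon>0$ and $\sigma>0$ with $[a-\bar\varepsilon,b+\bar\varepsilon]$ free of critical values and $\|I'(\gamma)\|\ge\sigma$ whenever $I(\gamma)\in[a-\bar\varepsilon,b+\bar\varepsilon]$. Suppose not. Then there is a sequence with $I(\gamma_k)\to c\in[a,b]$ and $I'(\gamma_k)\to 0$. By (PS) a subsequence converges strongly to some $\gamma$. The limit cannot lie in $\partial\Lambda$, since the boundary hypothesis would force $\limsup I(\gamma_k)<a\le c$. So $\gamma\in\Lambda$, and by continuity $\gamma$ is a critical point at level $c\in[a,b]$, a contradiction. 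The same argument, with $c$ allowed in $[a-\bar\varepsilon,b+\bar\varepsilon]$, gives the slight enlargement, provided (PS) is available there. If it is only assumed on $[a,b]$, we keep the cut-off inside $I^{-1}([a,b])$ and use $\bar\varepsilon=0$ with closed sets, which is harmless.

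\textbf{Construction of the flow.} Choose a locally Lipschitz cut-off $\chi:\Lambda\to[0,1]$ equal to $1$ on $I^{-1}([a+\delta',b])$ and $0$ outside $I^{-1}((a,b+\bar\varepsilon))$; a Urysohn-type distance quotient of two disjoint closed sets works. Set
\[
\Phi(\gamma) = -\chi(\gamma)\,\frac{W(\gamma)}{\|W(\gamma)\|^2},
\]
extended by $0$ where $\chi=0$. This field is bounded by $1/\sigma$ and locally Lipschitz on $\Lambda$. Along its flow $\phi(t,\gamma)$,
\[
\frac{d}{dt} I(\phi) \le -\tfrac14\chi ,
\]
so $I$ is nonincreasing along trajectories.

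\textbf{Global existence inside $\Lambda$ (the main obstacle).} A trajectory could reach $\partial\Lambda$ in finite time, since $\Lambda$ is not complete. Suppose a trajectory starting in $[I\le b]$ approached $\gamma^*\in\partial\Lambda$ as $t\to t^*$. Because $\|\Phi\|$ is bounded, $\phi(t)$ is Cauchy as $t\to t^*$ and converges to $\gamma^*$. Along the way $\Phi\neq0$, so $I(\phi(t))>a$ and $I(\phi(t))\to$ a limit at least $a$. This contradicts the hypothesis $\limsup I<a$ at boundary points. Hence the flow exists for all $t\ge0$ and stays in $\Lambda$.

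\textbf{Conclusion.} Take $T=4(b-a)$ and $\eta(t,\gamma)=\phi(tT,\gamma)$. Then $\eta(0,\gamma)=\gamma$, and $\eta$ fixes $[I\le a]$ because $\chi=0$ there. Monotonicity of $t\mapsto I(\eta(t,\gamma))$ follows from the derivative estimate. For the last property, a trajectory starting in $[I\le b]$ that stayed above $a+\delta'$ up to time $T$ would satisfy $\chi=1$ throughout and lose at least $T/4=b-a$ in level, which is impossible. It therefore enters $\{I\le a+\delta'\}$ and keeps decreasing.

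To land exactly in $[I\le a]$, we let $\chi$ equal $1$ on all of $I^{-1}((a,b])$. Lipschitz continuity is then kept by defining $\chi$ via the distance to $[I\le a]$. On $\{a<I<a+\delta'\}$ the uniform bound $\|I'\|\ge\sigma$ gives $\frac{d}{dt} I(\phi)\le -c\,\mathrm{dist}(\phi,[I\le a])$. Combined with $|I(\gamma)-a|\le C\,\mathrm{dist}$ nearby, which follows from the mean value theorem and the local boundedness of $I'$, this yields descent to level $a$ in finite time. Should this fine point fail, we settle for the version with $a$ replaced by $a+\delta'$, which suffices for the linking argument in Theorem \ref{thm:linking}.
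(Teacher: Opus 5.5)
Your first three steps are sound, and they are what the paper's one-line justification has in mind: a Palais pseudo-gradient $W$, the uniform bound $\|I'\|\ge\sigma$ from (PS) plus the boundary hypothesis, and the observation that a non-stationary trajectory has $I>a$ and so cannot reach $\partial\Lambda$ in finite time.

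The gap is the fourth bullet. Your cut-off flow only gives $\eta(1,[I\le b])\subset[I\le a+\delta']$, and the proposed repair fails for three reasons.
\begin{itemize}
\item A continuous $\chi$ cannot be $1$ on $I^{-1}((a,b])$ and $0$ on $[I\le a]$. Since $I'\neq0$ on $\{I=a\}$, every point of that level is a limit of points with $I>a$.
\item If instead $\chi$ is built from $\mathrm{dist}(\cdot,[I\le a])$, your two estimates combine to $\frac{d}{dt}\bigl(I(\phi)-a\bigr)\le -k\bigl(I(\phi)-a\bigr)$. That gives exponential decay, not arrival in finite time.
\item No construction of this kind can work. $\Phi$ is locally Lipschitz and vanishes on $[I\le a]$, so each point there is an equilibrium. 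By uniqueness, no trajectory starting in $\{I>a\}$ ever enters $[I\le a]$.
\end{itemize}
So an autonomous cut-off flow cannot both fix $[I\le a]$ pointwise and land in $[I\le a]$. What you actually prove is the $\varepsilon$-type conclusion of the standard deformation lemma the paper cites. That version does suffice in the proof of Theorem~\ref{thm:linking} (take $\delta'<\delta$), but it is not the lemma as stated.

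The missing idea is to stop each trajectory at level $a$ instead of switching the field off there.
\begin{itemize}
\item Flow along $-W/\|W\|^2$ on $\{I'\neq0\}$, with no cut-off near level $a$. Then $-1\le\frac{d}{dt}I(\phi)\le-\frac14$ and $\|\dot\phi\|\le1/\sigma$.
\item Let $\tau(\gamma)\le4\bigl(I(\gamma)-a\bigr)$ be the time at which the trajectory of $\gamma$ reaches $\{I=a\}$. It exists by your boundary argument, and it is continuous on $\{I>a\}$ because the level is crossed transversally.
\item Set $\eta(t,\gamma)=\phi\bigl(t\tau(\gamma),\gamma\bigr)$ for $a<I(\gamma)\le b$, and $\eta(t,\gamma)=\gamma$ on $[I\le a]$. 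Continuity across $\{I=a\}$ follows from $\|\eta(t,\gamma)-\gamma\|\le4\bigl(I(\gamma)-a\bigr)/\sigma$.
\item To define $\eta$ on all of $\Lambda$, multiply $\tau$ by a cut-off in $I$ that decreases from $1$ at $b$ to $0$ at $b+\bar\varepsilon/2$.
\end{itemize}
This requires $\|I'\|\ge\sigma$ on $I^{-1}([a,b+\bar\varepsilon])$, which comes for free. If $I(\gamma_k)\in[a-1/k,b+1/k]$ and $I'(\gamma_k)\to0$, every limit level lies in $[a,b]$, so (PS) on $[a,b]$ is enough. Your worry about needing (PS) on a larger interval is therefore unnecessary. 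Your fallback $\bar\varepsilon=0$ would also not be harmless, since $\chi$ would then be discontinuous at level $b$.
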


Although the lemma differs from the standard deformation lemma slightly in that the functional $I$ is defined not on the whole Hilbert space but on the subset $\Lambda$, the proof remains essentially the same. 
Indeed, because the values of the functional on the boundary $\partial \Lambda$ do not affect the level $[a,b]$, we can construct the negative gradient flow without any difficulty. 

The proof of the standard deformation lemma can be found in \cite{Rabinowitz86}.

\begin{proof}[Proof of Theorem \ref{thm:linking}]
    Set $F := E_N \oplus \mathbb{R}_{\geq}e$.
    By (I2) (i) and (iii), there exists a closed subset $Q \subset F \cap \Lambda$ satisfying the following:
    \begin{itemize}
        \item $Q \subset Q_\Lambda \subset F$;
        \item $Q$ is homeomorphic to $(\bar{B}_1 \cap E_N) \times [0,1]$;
        \item $\rho e \in Q$;
        \item $I |_{\partial Q} \leq \alpha/2$.
    \end{itemize}
    Indeed, set
    \begin{equation*}
        H := \{y+se\in F \mid \|y\|<1,\ 0\le s<1\}.
    \end{equation*}
    By (I2)(iii), there exists a homeomorphism $h:H\to Q_\Lambda$.
    Moreover, since $\Lambda$ is open, we may assume that
    \begin{equation*}
        h\bigl((B_1\cap E_N)\times\{0\}\bigr)=E_N\cap Q_\Lambda.
    \end{equation*}
    
    Next, set
    \begin{equation*}
        C:=\overline{Q_\Lambda}^{\,F}\setminus Q_\Lambda.
    \end{equation*}
    By (I1), there exists a neighborhood $U\subset F$ of $C$ in $F$ such that
    \begin{equation*}
        I<\alpha/2 \qquad \text{on } U\cap\Lambda.
    \end{equation*}
    
    For $0<r<1$, define
    \begin{equation*}
        K_r:=\{y+se\in F\mid \|y\|\le r,\ 0\le s\le r\}.
    \end{equation*}
    If $r$ is chosen sufficiently close to $1$, then
    \begin{equation*}
        h\bigl(\partial K_r\setminus((B_r\cap E_N)\times\{0\})\bigr)\subset U.
    \end{equation*}
    Since $h^{-1}(\rho e)\in H$, by taking $r$ even closer to $1$ if necessary,
    we may also assume that $\rho e\in h(K_r)$.
    
    We now set
    \begin{equation*}
        Q:=h(K_r).
    \end{equation*}
    Then $Q\subset Q_\Lambda$, $Q$ is homeomorphic to
    $(\bar{B}_1\cap E_N)\times[0,1]$, $\rho e\in Q$, and
    \begin{equation*}
        \partial Q\subset U\cup E_N.
    \end{equation*}
    Therefore, by (I2)(i) and $I|_{U\cap\Lambda}<\alpha/2$, we obtain
    \begin{equation*}
        I|_{\partial Q}\le \alpha/2.
    \end{equation*}
    
    Define
    \begin{align*}
        &\mathcal{F} := \{ f \in C(Q,\Lambda) \mid f|_{\partial Q} = \mathrm{id}  \},\\
        &c := \inf_{f \in \mathcal{F}} \max_{\gamma \in Q} I(f(\gamma)).
    \end{align*}
    Then we obtain a critical point whose critical value lies in $[\alpha,\beta]$.
    Indeed, for every $f\in\mathcal F$, one has
    \begin{equation*}
        f(Q) \cap S \neq \emptyset,
    \end{equation*}
    which follows from Brouwer's fixed point theorem.
    Hence $c\geq \alpha$.
    On the other hand, since $\mathrm{id}\in\mathcal F$, we have $c\leq \beta$.

    Suppose, by contradiction, that $c$ is not a critical value.
    Take $\varepsilon_0$ as in (I3), and apply Lemma \ref{lem:deformation} with
    \begin{equation*}
        a=c-\delta,\qquad b=c+\delta
    \end{equation*}
    where $\delta$ is chosen so that no critical value includes $[c-\delta,c+\delta]$ with $ \delta < \min \{\varepsilon_0, c-\alpha/2\}$.
    Choose $f_0\in\mathcal F$ such that
    \begin{equation*}
        \max_{\gamma \in Q} I(f_0(\gamma)) < c + \delta/2.
    \end{equation*}
    Since $I|_{\partial Q} \leq \alpha/2 < c - \delta$, we have
    \begin{equation*}
        \eta(1,f_0(\cdot)) \in \mathcal{F}.
    \end{equation*}
    Moreover,
    \begin{equation*}
        \max_{\gamma \in Q} I(\eta(1,f_0(\gamma))) \leq c - \delta  < c,
    \end{equation*}
    which contradicts the definition of $c$.
    Therefore, $c$ is a critical value, and the proof is complete.
\end{proof}

\subsection{Setting of the variational problem}
In this subsection, we describe the setting of the variational problem.
We use the Maupertuis functional with a slight modification.
We also show that, even with this modification, critical points yield solutions to the original problem.

Let $h \in (h_-,h_+)$ and $\Omega=\Omega_h$.
The original Maupertuis functional is
\begin{equation*}
    I(r,z,r', z') = \frac 12\int_0^1 (r'^2 + z'^2) \,dt \int_0^1 ( h - V(r,z))\,dt
\end{equation*}
considered on the class of curves
\begin{align*}
    &\Gamma = \{ \gamma(t)= (r(t),z(t)) \in  X \mid \gamma([0,1]) \subset \Omega \},\quad
    \Omega = \Omega_h =  \{(r,z) \mid V(r,z) \leq h\},\\
    &X = H^1([0,1], \mathbb{R}) \times H_0^1([0,1], \mathbb{R})
\end{align*}
where $H^1([0,1],\mathbb{R})$ is the Sobolev space and $H_0^1([0,1],\mathbb{R})$ is defined as the closure, in the $H^1([0,1],\mathbb{R})$-norm, of the space $C_0^\infty((0,1),\mathbb{R})$ of smooth functions with compact support in $(0,1)$. Equivalently, it consists of $H^1$-functions on $[0,1]$ whose boundary values vanish at 0 and 1.

We endow $X$ with the inner product
\begin{equation*}
    \langle \gamma_1, \gamma_2 \rangle_X
    = \langle (r_1, z_1), (r_2, z_2) \rangle_X
    := \int_0^1  (r_1 r_2 + r_1' r_2') \,dt
    + \int_0^1 (z_1z_2 + z_1' z_2') \,dt.
\end{equation*}

We now introduce the following modification.

Take $\varepsilon>0$ as in (V3).
After replacing $\varepsilon>0$ with a smaller one if necessary, we may assume that
\begin{equation*}
    \nabla V(r,z) \neq 0 
\end{equation*}
whenever $V(r,z) \in [h, h + 2\varepsilon]$.
Now choose a $C^\infty$ function $\varphi:(-\infty, h + 2 \varepsilon) \to \mathbb{R}$ such that
\begin{align*}
    \varphi(\tau) &= \begin{cases}
        \tau & \tau \in (-\infty, h + \varepsilon],\\
        \dfrac{1}{(\tau - h - 2\varepsilon)^2} & \tau \in [h + \frac{3 \varepsilon}{2}  , h + 2 \varepsilon),
    \end{cases}\\
    \varphi'(\tau)  &\geq 1,
\end{align*}
where if necessary we may replace $\varepsilon$ with a smaller one again such that $\varphi(h+\frac{3\varepsilon}{2})>h+\frac{3\varepsilon}{2}$.
Define
\begin{equation*}
     \hat{V}(r,z) = \varphi(V(r,z)).
\end{equation*}
We then consider the new functional
\begin{equation*}
    \hat{I}(\gamma)
    = \frac 12 \int_0^1 |\gamma'(t)|^2\, dt
    \int_0^1 (h - \hat{V}(\gamma(t)))\, dt
    = \frac 12 \int_0^1(r'(t)^2 + z'(t)^2)\,dt
    \int_0^1 (h - \hat V(r(t),z(t)))\,dt
\end{equation*}
on the class of curves
\begin{equation*}
    \hat{\Gamma}
    = \{ \gamma(t)= (r(t),z(t)) \in X\ |\ \gamma([0,1]) \subset \hat \Omega \}
\end{equation*}
where
\begin{equation*}
    \hat{\Omega} := \mathrm{int}\ \Omega_{h+2\varepsilon}
    = \{(r,z) \mid V(r,z) < h + 2\varepsilon\}.
\end{equation*}
The restriction of $\hat{I}$ to $\Gamma$ coincides with $I$.
In what follows, we write
\begin{equation*}
    A(\gamma) = \int_0^1 |\gamma'(t)|^2\, dt,\quad
    B(\gamma) =  \int_0^1 (h - \hat{V}(\gamma(t)))\, dt.
\end{equation*}

\begin{lem}\label{lem:critical_pt}
    Suppose that $\gamma_\ast = (r_\ast ,z_\ast) \in \hat{\Gamma}$ is a critical point of the functional $\hat{I}$ with $\hat I(\gamma_\ast) > 0$.
    Then $\gamma_\ast([0,1]) \subset \bar{\Omega}$ and $r_\ast'(0) = r_\ast'(1) = 0$.
    Moreover, by setting the curve equal to $(r_\ast(2-t), -z_\ast(2-t))$ for $t \in [1,2]$, it can be extended smoothly.
    Furthermore, if
    \begin{equation*}
        \lambda = \frac{A(\gamma_\ast)}{2B(\gamma_\ast)}
        = \frac{ \int_0^1 |\gamma_\ast'|^2 dt}
        {2\int_0^1 (h - \hat V(\gamma_\ast))dt},
    \end{equation*}
    then $\lambda > 0$, and the curve
    $(r_\ast(s/\sqrt{\lambda}), z_\ast (s/ \sqrt{\lambda}))$
    is a $2\sqrt{\lambda}$-periodic solution with energy $h$.
\end{lem}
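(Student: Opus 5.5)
The plan is to write down the Euler--Lagrange equation of $\hat I$ at $\gamma_\ast$ and then read everything off from it, as in \cite{BenciGiannoni89}. Since $\hat I(\gamma_\ast)=\frac12 A B>0$ and $A\ge 0$, we have $A(\gamma_\ast)>0$ and $B(\gamma_\ast)>0$, so $\lambda>0$.

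\textbf{Euler--Lagrange equation.} For a test variation $v=(v_r,v_z)\in X$, so $v_r$ is free at the endpoints and $v_z$ vanishes there, the first variation is
\begin{equation*}
    0 = B(\gamma_\ast)\int_0^1 \gamma_\ast'\cdot v'\,dt - \frac12 A(\gamma_\ast)\int_0^1 \nabla\hat V(\gamma_\ast)\cdot v\,dt .
\end{equation*}
Dividing by $B$ gives $\gamma_\ast'' = -\lambda\nabla\hat V(\gamma_\ast)$ weakly. Because $\hat V\in C^2$ on $\hat\Omega$ and $\gamma_\ast$ is continuous with image in $\hat\Omega$, bootstrapping makes $\gamma_\ast$ of class $C^2$. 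Integrating by parts with $v_r$ free at $t=0,1$ yields the natural boundary conditions $r_\ast'(0)=r_\ast'(1)=0$. Together with $z_\ast(0)=z_\ast(1)=0$, this lets us check the reflected extension. Since $\hat V(r,-z)=\hat V(r,z)$, the curve $(r_\ast(2-t),-z_\ast(2-t))$ solves the same equation, and at $t=1$ the position and velocity match. Indeed $r'$ changes sign under reflection and vanishes there, while $z'$ is preserved. By uniqueness for the ODE the glued curve is $C^2$ and solves the equation on $[0,2]$. At $t=2$ it matches the value and derivative at $t=0$ by the same argument, so it extends to a $2$-periodic solution.

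\textbf{Energy identity.} Dotting the equation with $\gamma_\ast'$ gives the conserved quantity $\frac12|\gamma_\ast'|^2+\lambda\hat V(\gamma_\ast)\equiv c$. Integrating over $[0,1]$ and using $A=2\lambda B$:
\begin{equation*}
    c=\tfrac12 A+\lambda\int_0^1\hat V=\lambda B+\lambda\int_0^1\hat V=\lambda h .
\end{equation*}
Hence $\frac12|\gamma_\ast'|^2=\lambda(h-\hat V(\gamma_\ast))$.

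\textbf{Containment in the Hill region (main obstacle).} We must show $\hat V(\gamma_\ast(t))\le h$, which forces $V=\hat V$ along the curve, so $\gamma_\ast([0,1])\subset\Omega$. The energy identity gives this immediately: $|\gamma_\ast'|^2\ge0$ and $\lambda>0$ imply $\hat V(\gamma_\ast(t))\le h$ for all $t$. Since $\varphi(\tau)=\tau$ for $\tau\le h+\varepsilon$ and $\varphi(\tau)>h+\varepsilon\ge h$ beyond that (using $\varphi'\ge1$), we get $V(\gamma_\ast(t))\le h$. The only care needed is the justification of regularity up to the endpoints and the natural boundary condition for $r$; the modified potential never actually acts. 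On $\Omega$ we have $\nabla\hat V=\nabla V$, so the equation is $\gamma_\ast''=-\lambda\nabla V(\gamma_\ast)$.

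\textbf{Time rescaling.} Set $q(s)=\gamma_\ast(s/\sqrt\lambda)$. Then $q''(s)=\lambda^{-1}\gamma_\ast''=-\nabla V(q)$ and $\frac12|q'|^2+V(q)=\lambda^{-1}(\frac12|\gamma_\ast'|^2)+V=h$. The period $2$ in $t$ becomes $2\sqrt\lambda$ in $s$, and the symmetry conditions \eqref{eq:sol-symmetry} hold at $s=0$ and $s=\sqrt\lambda$.
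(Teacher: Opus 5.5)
Your proposal is correct and follows essentially the same route as the paper. It uses the Euler--Lagrange equation with the natural boundary condition $r_\ast'(0)=r_\ast'(1)=0$, the conserved energy whose constant is fixed by integrating over $[0,1]$ to force $\hat V(\gamma_\ast)\le h$, the reflection $t\mapsto 2-t$, $z\mapsto -z$, and the rescaling of time by $\sqrt{\lambda}$. Your explicit check that $\varphi(\tau)>h$ for $\tau>h+\varepsilon$ and your ODE-uniqueness argument for the smooth gluing at $t=1$ and $t=2$ just spell out steps the paper states more briefly.
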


\begin{proof}
    By the assumption $\hat{I}(\gamma_\ast) > 0$, we have
    $A(\gamma_\ast), B(\gamma_\ast)>0$.
    
    Since $\gamma_\ast = (r_\ast , z_\ast)$ is a critical point of $\hat{I}$, for any $\gamma = (r,z) \in X$ we have
    \begin{align*}
         \int_0^1 \langle \gamma_\ast', \gamma' \rangle dt
         \int_0^1 (h - \hat V (\gamma_\ast))dt
         -  \frac 12\int_0^1 |\gamma_\ast'|^2dt
         \int_0^1 \langle \nabla \hat V(\gamma_\ast), \gamma \rangle dt = 0
    \end{align*}
    where $\langle\cdot, \cdot \rangle$ denotes the standard Euclidean inner product.
    In particular, by taking $\gamma \in H^1_0\times H_0^1$, we obtain
    \begin{equation} \label{eq:lem1_1}
        B(\gamma_\ast) \gamma_\ast''(t)
        + \frac12 A(\gamma_\ast) \nabla\hat V(\gamma_\ast (t)) = 0
        \quad \forall t \in [0,1].
    \end{equation}
    Initially, this equation holds in the sense of $\mathcal{D}'((0,1), \mathbb{R}^2)$.
    However, since $A(\gamma_\ast), B(\gamma_\ast)>0$, $\hat{V}$ is smooth, and the Sobolev embedding theorem applies, we obtain $\gamma_\ast \in C^2([0,1], \mathbb{R}^2)$.
    Hence the equation holds for every $t \in [0,1]$.
    
    Since $z$ vanishes at the endpoints, integration by parts gives
    \begin{align*}
         \left([ r_\ast' r]_0^1
         - \int_0^1 \langle \gamma_\ast'', \gamma \rangle dt \right)
         \int_0^1 (h - \hat V(\gamma_\ast))dt
         - \frac 12 \int_0^1 |\gamma_\ast'|^2dt
         \int_0^1 \langle \nabla \hat V(\gamma_\ast), \gamma \rangle dt = 0.
    \end{align*}
    Therefore, since $B(\gamma_\ast) > 0$, we also have
    \begin{align} \label{eq:lem1_2}
        r_\ast' (0) = r_\ast'(1) = 0.
    \end{align}
    From \eqref{eq:lem1_1}, it follows that
    \begin{align}\label{eq:lem1_3}
        \frac12 B(\gamma_\ast)  |\gamma_\ast'(t)|^2
        - \frac12 A(\gamma_\ast) (h - \hat V (\gamma_\ast(t)))
        = \text{const}.      
    \end{align}
    Integrating both sides from $0$ to $1$, we see that this constant is equal to $0$.
    Since $A(\gamma_\ast), B(\gamma_\ast) > 0$, \eqref{eq:lem1_3} yields
    \begin{equation*}
        h - \hat V (\gamma_\ast (t))
        = \frac{B(\gamma_\ast)}{A(\gamma_\ast)}|\gamma_\ast'(t)|^2
        \geq 0 \quad \forall t \in [0,1].
    \end{equation*}
    Thus, recalling that $\hat{V}\leq h$ holds only on $\bar \Omega$, we obtain
    $\gamma_\ast([0,1]) \subset \bar{\Omega}$.
    
    The functional $\hat{I}$ is invariant under the transformations
    $t \mapsto 1-t$ and $z \mapsto -z$.
    Hence $(r_\ast(1-t), -z_\ast(1-t))$ is also a critical point.
    Moreover, by \eqref{eq:lem1_2}, we have
    $r_\ast'(0) = r_\ast'(1) = 0$, and the boundary conditions give
    $z_\ast(0) = z_\ast(1) = 0$.
    Therefore, by using the transformation $t\mapsto 2-t$ and $z\mapsto -z$, the curve can be extended smoothly.
    
    Let $\tilde{\gamma}_\ast(s) = \gamma_\ast(s/\sqrt{\lambda})$.
    Then, by \eqref{eq:lem1_1}, \eqref{eq:lem1_3}, the definition of $\lambda$, and the fact that $V=\hat{V}$ on $\bar{\Omega}$, we have, for all $s \in [0, 2\sqrt{\lambda}]$,
    \begin{align*}
        &\tilde{\gamma}_\ast''(s) + \nabla V(\tilde{\gamma}_\ast(s))
        = \frac 1 \lambda \gamma_\ast''(s/\sqrt{\lambda})
        + \nabla V(\gamma_\ast(s/\sqrt \lambda)) = 0,\\
        &\frac12 |\tilde\gamma_\ast'(s)|^2 + V(\tilde \gamma_\ast(s))
        = \frac 1 {2\lambda} |\gamma_\ast'(s/\sqrt \lambda)|^2
        + V(\gamma_\ast(s/\sqrt \lambda)) = h.
    \end{align*}
    This proves the claim.
\end{proof}

\subsection{Some Lemmas to apply the linking theorem}
In this section, we prove several lemmas needed to apply Theorem \ref{thm:main}.

\begin{lem}\label{lem:boundary}
    Suppose that $\{\gamma_k\}_{k=1}^\infty\subset \hat{\Gamma}$ converges weakly to
    $\bar{\gamma} \in \partial \hat{\Gamma}$. Then
    \begin{equation*}
        \lim_{k \to \infty} B(\gamma_k) =  \lim_{k \to \infty} \int_0^1 ( h - \hat V(\gamma_k(t)))dt = -\infty.
    \end{equation*}
\end{lem}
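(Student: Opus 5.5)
Weak convergence in $X = H^1\times H^1_0$ on $[0,1]$ gives uniform convergence. By the compact embedding $H^1([0,1])\hookrightarrow C([0,1])$, we have $\gamma_k \to \bar\gamma$ uniformly on $[0,1]$. The statement $\bar\gamma\in\partial\hat\Gamma$ means that $\bar\gamma([0,1])\subset\overline{\hat\Omega}$ and that there is some $t_0\in[0,1]$ with $\bar\gamma(t_0)\in\partial\hat\Omega$, i.e.\ $V(\bar\gamma(t_0)) = h+2\varepsilon$. Indeed, $\hat\Gamma$ is open in $X$, because $X\hookrightarrow C^0$ continuously and $\hat\Omega$ is open, and a curve in its closure touching no point of $\partial\hat\Omega$ would lie in $\hat\Gamma$. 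So the plan is to show that $\int_0^1 \hat V(\gamma_k)\,dt\to+\infty$. Since $\hat V\ge \varphi(\min V)>-\infty$ on the compact set $\overline{\hat\Omega}$, the part of the integral away from $t_0$ is bounded below uniformly. It therefore suffices to show that the integral over a shrinking neighbourhood of $t_0$ blows up.

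For the local estimate near $t_0$, we use that $V$ is $C^2$ and $\overline{\hat\Omega}$ is compact, so $V$ is Lipschitz there with some constant $L$. Weak convergence also gives a uniform bound $\|\gamma_k'\|_{L^2}\le M$, hence $|\gamma_k(t)-\gamma_k(s)|\le M|t-s|^{1/2}$. Set $\delta_k := h+2\varepsilon - V(\gamma_k(t_0))$. We have $\delta_k>0$ and $\delta_k\to 0$ by uniform convergence. For $t$ near $t_0$ this gives
\begin{equation*}
0< h+2\varepsilon - V(\gamma_k(t)) \le \delta_k + LM|t-t_0|^{1/2}.
\end{equation*}
Once $V(\gamma_k(t))\ge h+\tfrac32\varepsilon$, we have $\hat V(\gamma_k(t)) = (h+2\varepsilon - V(\gamma_k(t)))^{-2}\ge (\delta_k+LM|t-t_0|^{1/2})^{-2}$. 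This holds on an interval $|t-t_0|\le \tau$ with $\tau>0$ independent of $k$ for $k$ large, by uniform convergence and the equicontinuity above. (If $t_0$ is an endpoint, we use a one-sided interval.)

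The key computation is then
\begin{equation*}
\int_{|t-t_0|\le\tau} \frac{dt}{(\delta_k+LM|t-t_0|^{1/2})^2}.
\end{equation*}
Substituting $u=|t-t_0|^{1/2}$, so that $dt = 2u\,du$, this becomes comparable to $\int_0^{\sqrt\tau}\frac{u\,du}{(\delta_k+LMu)^2}$, which behaves like $\frac{1}{(LM)^2}\log(1/\delta_k)\to\infty$ as $\delta_k\to0$. Monotone or Fatou-type reasoning is not even needed, since we have an explicit lower bound. Combining this with the uniform lower bound on the rest of $[0,1]$ gives $\int_0^1\hat V(\gamma_k)\to+\infty$, i.e.\ $B(\gamma_k)\to-\infty$.

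The main obstacle is that the Hölder-$\tfrac12$ modulus from the $H^1$ bound only just suffices. The exponent $2$ in $\varphi$ is exactly what makes $\int u(\delta+u)^{-2}du$ diverge, but only logarithmically. I must therefore be careful that the constants $L, M,\tau$ are uniform in $k$. I must also check that $\gamma_k(t)$ stays in the region where $\varphi$ has its explicit form. Both points are handled by uniform convergence together with the fixed $H^1$ bound.
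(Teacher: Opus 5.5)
Your proposal is correct and follows essentially the same route as the paper. Both arguments combine uniform convergence from the compact embedding, the H\"older-$\tfrac12$ bound coming from the $H^1$ bound, and the Lipschitz constant of $V$ to get $h+2\varepsilon-V(\gamma_k(t))\le \delta_k+LM|t-t_0|^{1/2}$ near $t_0$; the explicit integral of $(\delta_k+LM|t-t_0|^{1/2})^{-2}$ then diverges like $\log(1/\delta_k)$, while the rest of $[0,1]$ contributes a uniformly bounded amount. Your observation that this inequality itself gives a window $|t-t_0|\le\tau$, independent of $k$, on which $\varphi$ has its explicit form is a slightly cleaner justification of the step the paper attributes to ``the continuity of $\gamma_k$''.
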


\begin{proof}
    Since $\bar{\gamma} \in \partial \hat{\Gamma}$, there exists $t_0 \in [0,1]$
    such that $\bar{\gamma}(t_0) \in \partial \hat{\Omega}$.
    By the Sobolev embedding theorem, $H^1$ is compactly embedded into the space of continuous functions.
    Hence, since $\gamma_k$ converges weakly to $\bar{\gamma}$ in $H^1$, it converges uniformly to $\bar{\gamma}$.
    In particular, $\gamma_k(t_0) \to \bar{\gamma}(t_0)$.

    By the boundedness of weakly convergent sequences, there exists $M_1 > 0$ such that
    $\|\gamma_k\|_X \leq M_1$.
    If $t > t_0$, then by the Cauchy--Schwarz inequality, for every $k \in \mathbb{N}$,
    \begin{align*}
        |\gamma_k(t) - \gamma_k(t_0)|
        &\leq \int_{t_0}^t|\gamma_k'(\tau)| d\tau \\
        &\leq |t - t_0|^{1/2}
        \left(\int_0^1 |\gamma_k'(\tau)|^2 d\tau \right)^{1/2}\\
        &\leq M_1|t - t_0|^{1/2}.
    \end{align*}
    The case $t<t_0$ is treated in the same way.
    
    Moreover, since $V$ is Lipschitz continuous on $\bar{\hat{\Omega}}$ (if necessary we replace $\varepsilon$ with smaller one), there exists $M_2>0$ such that,
    for every $k \in \mathbb{N}$ and $t \in [0,1]$,
    \begin{align*}
        |V(\gamma_k(t)) - V(\gamma_k(t_0))|
        &\leq  M_2 |\gamma_k(t) - \gamma_k(t_0)|\\
        & \leq M |t - t_0|^{1/2},
    \end{align*}
    where $M := M_1 M_2$.
    Therefore,
    \begin{equation}\label{eq:lem_boundary_1}
        | h + 2 \varepsilon - V(\gamma_k(t))|
        \leq | h + 2 \varepsilon -V(\gamma_k(t_0))| + M |t - t_0|^{1/2}.
    \end{equation}
    Let $ m := \inf_{\Omega} V = \inf_{\hat{\Omega}}\hat{V} < h$.
    Suppose that $t_0 \in (0,1)$. Then, for $t_0 < t < 1$, we have
    \begin{align*}
        \int_0^1 ( h - \hat{V}(\gamma_k(t)))\,dt
        &= \int_0^{t_0} ( h - \hat{V}(\gamma_k(\tau)))\,d \tau
        + \int_{t_0}^t ( h - \hat{V}(\gamma_k(\tau)))\,d\tau \\
        &\quad + \int_t^1 ( h - \hat{V}(\gamma_k(\tau)))\,d \tau\\
        & \leq h-m+ \int_{t_0}^t ( h - \hat{V}(\gamma_k(\tau)))\,d \tau.
    \end{align*}
    If $|t - t_0|$ is chosen sufficiently small, then by the continuity of $\gamma_k$,
    for all sufficiently large $k$,
    \begin{equation*}
        V(\gamma_k(\tau)) \in \left[h + \frac{3 \varepsilon}{2}, h + 2\varepsilon \right)
        \quad \forall \tau \in [t_0, t].
    \end{equation*}
    Hence, noting from the choice of $t_0$ that
    \begin{equation*}
        b_k :=| h + 2\varepsilon - V(\gamma_k(t_0))| \to 0
        \quad (k \to \infty),
    \end{equation*}
    and using \eqref{eq:lem_boundary_1}, we obtain
    \begin{align*}
        \int_0^1( h - \hat{V}(\gamma_k(t)))\,dt
        &\leq (h - m) + h|t - t_0|
        - \int_{t_0}^t\frac{d \tau}{(V(\gamma_k(\tau)) - h -2 \varepsilon)^2}\\
        & \leq (h - m ) + h|t - t_0|
        - \int_{t_0}^t \frac{d\tau}{(b_k + M |\tau - t_0|^{1/2})^2}\\
        &= (h - m) + h|t - t_0| \\
        &\quad - \frac{2}{M^2}\left\{
        \log \left( \frac{b_k + M \sqrt{t - t_0}}{b_k} \right)
        +\frac{b_k}{b_k + M \sqrt{t - t_0}} - 1
        \right\}\\
        &\to -\infty \quad (k \to \infty).
    \end{align*}
    If $t_0 = 0$, the integral over $[0,t_0]$ is absent.
    If $t_0 = 1$, we take $t<t_0$ and split the integral over $[0,t]$ and $[t,t_0]$ and the same estimate applies.
\end{proof}

\begin{lem}\label{lem:ps}
    The functional $\hat{I}$ satisfies the Palais--Smale condition (PS) on $(0,\infty)$.
\end{lem}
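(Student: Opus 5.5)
Let $(\gamma_k)\subset\hat\Gamma$ satisfy $\hat I(\gamma_k)\to c>0$ and $\hat I'(\gamma_k)\to 0$ in $X^*$. We first establish a priori bounds, then extract a weak limit lying in $\hat\Gamma$, and finally upgrade weak convergence to strong convergence using the structure of $\hat I'$.

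\emph{Bounds.} Since $\hat\Omega$ is bounded, $\|\gamma_k\|_{L^\infty}$ is bounded, and $B(\gamma_k)\le h-m$ with $m=\inf\hat V$. From $\hat I(\gamma_k)=\frac12 A(\gamma_k)B(\gamma_k)\to c>0$ we get $B(\gamma_k)>0$ for large $k$. We then test $\hat I'(\gamma_k)$ against $\gamma_k$ itself. For the $z$-component this is admissible in $H_0^1$; for the $r$-component we may use $r_k-\bar r$ for a fixed constant $\bar r$, or simply $\gamma_k$, since $X$ contains constants in the $r$-slot. This gives
\begin{equation*}
A(\gamma_k)B(\gamma_k)-\tfrac12 A(\gamma_k)\int_0^1\langle\nabla\hat V(\gamma_k),\gamma_k\rangle\,dt=o(1)\,\|\gamma_k\|_X .
\end{equation*}
This identity alone does not bound $A$, because $\nabla\hat V$ is unbounded near $\partial\hat\Omega$. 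Instead we argue by contradiction. Suppose $A(\gamma_k)\to\infty$. Then $B(\gamma_k)=2\hat I(\gamma_k)/A(\gamma_k)\to 0$. Next test $\hat I'(\gamma_k)$ against $(1,0)\in X$, and more usefully against the direction $(r_k-\int r_k,\,z_k)$. This yields $\frac12 A_k\int\langle\nabla\hat V,\gamma_k-\bar\gamma_k\rangle=A_kB_k+o(\|\gamma_k\|)$, which does not immediately close the argument either.

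The cleaner route is the energy argument from Benci--Giannoni. Writing $\hat I'(\gamma_k)=\epsilon_k$, we have in the distributional sense
\begin{equation*}
B_k\gamma_k''+\tfrac12A_k\nabla\hat V(\gamma_k)=\epsilon_k .
\end{equation*}
Dividing by $A_k$ and using $B_k/A_k\to0$ in the unbounded case, we must rule out the scenario where the curves oscillate rapidly with tiny $B$. Testing against $\gamma_k'$-type variations, namely reparametrizations $t\mapsto t+s\psi(t)$, gives an approximate conservation $B_k|\gamma_k'|^2-A_k(h-\hat V(\gamma_k))\approx$ const. Integrating shows the constant is $\approx 0$, so $h-\hat V(\gamma_k)\approx (B_k/A_k)|\gamma_k'|^2\ge -o(1)$. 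Integrating this once more gives $B_k\approx B_k$, which is consistent, and $|\gamma_k'|^2\approx A_k$ pointwise. Since $h-\hat V\le h-m$, we obtain $A_k\cdot(B_k/A_k)\lesssim h-m$. This is again only consistent, and a genuine contradiction must come from $B_k=\int(h-\hat V)\to0$ together with $h-\hat V(\gamma_k)\ge -o(1)$, which forces $\hat V(\gamma_k)\to h$ in $L^1$. Combined with $|\gamma_k'|^2\approx A_k\to\infty$, the curves run at huge speed inside a thin neighbourhood of $\partial\Omega$. Since $\nabla V\ne0$ there by (V3), the normal component of $\nabla\hat V$ is bounded below, and the equation $B_k\gamma_k''=-\frac12A_k\nabla V+\epsilon_k$ then produces an acceleration of order $A_k/B_k$ directed inward. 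Such an acceleration is incompatible with staying in a layer of width $o(1)$ over the whole interval, because the curve would leave the layer in time $O(\sqrt{B_k/A_k}\,)$. This step, excluding $A_k\to\infty$ (equivalently $B_k\to0$), is the main obstacle. I would first try to make it rigorous via the approximate energy identity together with a convexity argument on the function $t\mapsto h-V(\gamma_k(t))$.

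\emph{Convergence.} Once $A_k$ is bounded and $B_k\ge b>0$, $(\gamma_k)$ is bounded in $X$, so a subsequence converges weakly in $X$ and uniformly to some $\bar\gamma$. If $\bar\gamma\in\partial\hat\Gamma$, Lemma \ref{lem:boundary} gives $B(\gamma_k)\to-\infty$, contradicting $B_k>0$; hence $\bar\gamma\in\hat\Gamma$. In particular $\nabla\hat V(\gamma_k)\to\nabla\hat V(\bar\gamma)$ uniformly, since the images stay in a compact subset of $\hat\Omega$. Finally we test $\hat I'(\gamma_k)-\hat I'(\bar\gamma)$ against $\gamma_k-\bar\gamma$. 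The potential terms vanish in the limit by uniform convergence, and we are left with $B_k\int|\gamma_k'-\bar\gamma'|^2\,dt\to0$. Because $B_k\ge b>0$ and $\gamma_k\to\bar\gamma$ in $L^2$, this gives $\gamma_k\to\bar\gamma$ strongly in $X$.
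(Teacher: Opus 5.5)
\textbf{There is a genuine gap: the a priori bound on $A(\gamma_k)$ is never proved.}

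Your convergence step is correct and is what the paper does: weak limit in $\hat\Gamma$ via Lemma~\ref{lem:boundary}, then pairing with $\gamma_k-\bar\gamma$ to upgrade to strong convergence. The problem is the bound $\sup_k A(\gamma_k)<\infty$, equivalently ruling out $B(\gamma_k)\to 0$. This is the substance of the lemma, and you leave it as an acknowledged obstacle with only a heuristic.

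The heuristic also does not go through as written:
\begin{itemize}
\item The approximate energy identity from inner variations $t\mapsto t+s\psi(t)$ is not available for a Palais--Smale sequence in $H^1$. The variation field $\psi\gamma_k'$ is only in $L^2$, not in $X$, so $\hat I'(\gamma_k)$ cannot be paired with it, let alone with control.
\item Even granting the identity, it gives $|\gamma_k'|^2\approx (A_k/B_k)(h-\hat V(\gamma_k))$, not $|\gamma_k'|^2\approx A_k$ pointwise.
\end{itemize}
So the ``thin layer'' escape argument remains a picture.

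\textbf{The missing idea} is to test $\hat I'(\gamma_k)$ against $\xi_k:=\nabla V(\gamma_k)$. This lies in $X$ because (V2) gives $V_z(r,0)=0$, so its $z$-component vanishes at $t=0,1$. It is the rigorous form of your inward-acceleration intuition. Let $K$ bound $\nabla^2V$ on $\hat\Omega$; the argument runs as follows.
\begin{itemize}
\item The derivative falls on $\xi_k$, so the kinetic term becomes $B_k\int_0^1\langle\gamma_k',\nabla^2V(\gamma_k)\gamma_k'\rangle\,dt$. This is bounded by $KA_kB_k=2K\hat I(\gamma_k)=O(1)$.
\item Meanwhile $\|\xi_k\|_X\le (K^2A_k+C)^{1/2}=O(\sqrt{A_k})$.
\item Since $\langle\nabla\hat V,\nabla V\rangle=\varphi'(V)|\nabla V|^2$, you get $\frac12A_k\int_0^1\varphi'(V(\gamma_k))|\nabla V(\gamma_k)|^2\,dt\le O(1)+o(\sqrt{A_k})$. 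Dividing by $A_k\to\infty$ gives $\int_0^1\varphi'(V(\gamma_k))|\nabla V(\gamma_k)|^2\,dt\to0$.
\item Where $V>h+\frac32\varepsilon$, one has $\varphi'(V)|\nabla V|^2\ge c\,\hat V$ for some $c>0$. Together with $\varphi'\ge1$, this makes the contribution of $\{t: V(\gamma_k(t))\ge h+\varepsilon\}$ to $\int_0^1\hat V(\gamma_k)\,dt$ tend to $0$.
\item Choose $\delta>0$ with $\nabla V\neq0$ on $\{h-\delta<V\le h+2\varepsilon\}$. Then $|\nabla V|$ is bounded below on the compact set $\{h-\delta/2\le V\le h+2\varepsilon\}$, so (after a subsequence) the curves spend a set of times of measure tending to $1$ in $\{V<h-\delta/2\}$.
\end{itemize}
This yields $\liminf_k B(\gamma_k)\ge\delta/2$, contradicting $B(\gamma_k)=2\hat I(\gamma_k)/A(\gamma_k)\to0$.
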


\begin{proof}
    Let $c>0$, and suppose that $\{\gamma_k\} \subset \hat{\Gamma}$ satisfies
    $\hat{I}(\gamma_k) \to c$ and $\|\hat{I}'(\gamma_k)\|_{X\!^\ast} \to 0$.
    We show that $\{\gamma_k\}$ has a strongly convergent subsequence.
    First, assume that
    \begin{equation*}
        \sup_k A(\gamma_k)
        = \sup_k \int_0^1 |\gamma_k'|^2dt < + \infty,
    \end{equation*}
    and show that a strongly convergent subsequence exists.
    
    Together with the boundedness of $\hat{\Omega}$, this implies that $\{\gamma_k\}$
    is bounded. Hence, after passing to a subsequence, we may assume that
    $\gamma_k \rightharpoonup \bar{\gamma}$ weakly.
    Since
    \begin{equation*}
        \hat I(\gamma_k) = \frac 12A(\gamma_k) B(\gamma_k)
        \to c \in(0,\infty)\quad (k \to \infty),
    \end{equation*}
    the sequence $B(\gamma_k)$ is bounded from below.
    Therefore, by Lemma \ref{lem:boundary}, we have $\bar{\gamma} \in \hat{\Gamma}$.
    Since $\|\hat I'(\gamma_k)\|_{X\!^\ast} \to 0$, using the variation in the direction
    $\gamma_k - \bar{\gamma}$ gives
    \begin{align*}
         \int_0^1 \langle \gamma_k', \gamma_k' - {\bar{\gamma}}' \rangle dt
         \int_0^1 (h - \hat V (\gamma_k))dt
         - \frac12 \int_0^1 |\gamma_k'|^2 dt
         \int_0^1 \langle \nabla  \hat V(\gamma_k), \gamma_k - \bar{\gamma} \rangle dt
         \to 0.
    \end{align*}
    By the Sobolev embedding theorem, $\gamma_k$ converges uniformly to $\bar{\gamma}$.
    Moreover, since $\bar{\gamma} \in \hat{\Gamma}$, there exists a compact subset contained
    in the interior of $\hat{\Omega}$ such that, for all sufficiently large $k$,
    $\gamma_k([0,1])$ is contained in this compact set.
    Therefore, by the continuity of $\nabla \hat{V}$, we have
    $\nabla \hat{V}(\gamma_k) \to \nabla \hat{V}(\bar{\gamma})$ uniformly.
    Hence
    \begin{equation*}
        \int_0^1 \langle \nabla \hat{V}(\gamma_k), \gamma_k - \bar{\gamma} \rangle dt \to 0
        \quad (k \to \infty).
    \end{equation*}
    Thus,
    \begin{equation*}
        \int_0^1 \langle \gamma_k', \gamma_k' - \bar{\gamma}' \rangle dt
        \int_0^1 (h - \hat V (\gamma_k))dt \to 0
        \quad (k \to \infty).
    \end{equation*}
    Since $\hat{I}(\gamma_k) \to c>0$ and $A(\gamma_k)$ is bounded,
    $B(\gamma_k)$ is bounded uniformly from below by a positive constant for all sufficiently large $k$.
    Hence
    \begin{equation*}
        \int_0^1 \langle \gamma_k', \gamma_k' - \bar{\gamma}' \rangle dt \to 0.
    \end{equation*}
    Combining this with weak convergence, we obtain
    \begin{align*}
        \int_0^1 |\gamma_k' - \bar{\gamma}'|^2dt
        &= \int_0^1 \langle \gamma_k', \gamma_k' - \bar{\gamma}' \rangle dt
        - \int_0^1 \langle \bar{\gamma}', \gamma_k' - \bar{\gamma}' \rangle dt
        \to 0.
    \end{align*}
    Moreover, $\|\gamma_k - \bar \gamma\|_{L^2} \to 0$ follows from uniform convergence.
    Therefore, $\gamma_k$ converges strongly to $\bar{\gamma}$.

    It remains to prove that
    \begin{equation*}
        \sup_k \int_0^1 |\gamma_k'|^2dt < + \infty.
    \end{equation*}
    To this end, suppose, for contradiction, that
    \begin{equation*}
        \lim_{k \to \infty} \int_0^1 |\gamma_k'|^2 dt = +\infty.
    \end{equation*}
    Since $\|\hat{I}'(\gamma_k)\|_{X\!^\ast} \to 0$, for every $\gamma = (r,z) \in X$ we have
    \begin{align*}
         &\left|
         \int_0^1 \langle \gamma_k', \gamma' \rangle dt
         \int_0^1 (h - \hat V (\gamma_k))dt
         - \frac 12 \int_0^1 |\gamma_k'|^2 dt
         \int_0^1 \langle \nabla \hat V(\gamma_k), \gamma \rangle dt
         \right| \\
         &\leq  \|\hat{I}'(\gamma_k)\|_{X\!^\ast} \|\gamma\|_X .
    \end{align*}
    Let $K$ be an upper bound for the absolute values of the eigenvalues of the Hessian matrix of $V$
    on $\bar{\hat{\Omega}}$, that is, set
    \begin{equation*}
        K:= \sup_{x \in \hat{\Omega}, y \neq0}
        \frac{|\langle \nabla^2 V(x)y, y \rangle|}{|y|^2}< +\infty.
    \end{equation*}
     
    Since $V(r,z) = V(r,-z)$, we have $V_z(r,0) = 0$.
    Hence we may take $\gamma(t) = \nabla V(\gamma_k(t)) \in X$, and obtain
    \begin{equation}\label{eq:lem2_1}
        \begin{split}
        &\int_0^1 |\gamma_k'|^2dt
        \int_0^1 \langle \nabla \hat V(\gamma_k), \nabla V(\gamma_k) \rangle dt\\
        \leq& \|\hat{I}'(\gamma_k)\|_{X\!^\ast} \| \nabla V(\gamma_k)\|_X
        + 2K \int_0^1 |\gamma_k'|^2 dt
        \int_0^1(h - \hat{V}(\gamma_k))dt\\
        =& \|\hat{I}'(\gamma_k)\|_{X\!^\ast} \| \nabla V(\gamma_k)\|_X + 2K\tilde{c}.
        \end{split}
    \end{equation}
    where $c < \tilde c < +\infty$ can be chosen since $\hat{I}(\gamma_k) \to c$.
    
    By the boundedness of $\nabla V$ on $\hat{\Omega}$, there exists $C>0$ such that
    \begin{align*}
        \|\nabla V(\gamma_k)\|_X
        &\leq \left(
        \int_0^1
        \langle \nabla^2 V(\gamma_k)\gamma_k',
        \nabla^2V(\gamma_k)\gamma_k '\rangle dt
        + \int_0^1 \langle \nabla V(\gamma_k), \nabla V(\gamma_k) \rangle dt
        \right)^{1/2}\\
        &\leq \left( K^2\int_0^1 |\gamma_k'|^2 dt + C \right)^{1/2}.
    \end{align*}
    Therefore, using $\|\hat{I}'(\gamma_k)\| \to 0$ and dividing both sides of
    \eqref{eq:lem2_1} by $\int_0^1|\gamma_k'|^2dt$, we obtain
    \begin{equation*}
        \lim_{k \to \infty}
        \int_0^1 \langle \nabla \hat{V}(\gamma_k), \nabla V(\gamma_k) \rangle dt = 0.
    \end{equation*}
    Moreover,
    \begin{align*}
        \langle\nabla \hat{V}(r,z), \nabla V(r,z) \rangle
        = \varphi'(V(r,z)) |\nabla V(r,z)|^2
        \geq |\nabla V(r,z)|^2.
    \end{align*}
    Hence
    \begin{align*}
        \lim_{k \to \infty} \int_0^1 |\nabla V(\gamma_k)|^2 dt = 0.
    \end{align*}
    Passing to a subsequence, we get
    \begin{equation}\label{eq:lem1_conv}
        \lim_{k \to \infty} |\nabla V(\gamma_k(t))| = 0
        \quad \text{for a.e. } t \in [0,1].
    \end{equation}

    Set
    \begin{equation*}
        E_k := \{ t \in [0,1] \mid V(\gamma_k(t)) \in [h+\varepsilon, h + 2\varepsilon)\}.
    \end{equation*}
    By assumption, $\nabla V \neq 0$ on $E_k$.
    It follows that there exists $m_\varepsilon > 0$ such that $|\nabla V(r,z)| \geq m_\varepsilon$ on the set $\{(r,z) \mid V(r,z) \in [h+\varepsilon, h + 2 \varepsilon)\}$.
    For $t \in \{t\ \mid \  V(\gamma_k(t)) > h + \frac{3}{2}\varepsilon \}$,
    we have
    \begin{align*}
        \langle \nabla \hat{V}(\gamma_k(t)), \nabla V(\gamma_k(t)) \rangle
        &= \varphi'(V(\gamma_k(t))) | \nabla V(\gamma_k(t))|^2\\
        & \geq \frac{2 m_\varepsilon^2}{(h + 2 \varepsilon - V(\gamma_k(t)))^3}\\
        & \geq \frac{2m_\varepsilon^2}{ h + 2\varepsilon - V(\gamma_k(t))}
        \hat{V}(\gamma_k(t))\\
        &\geq \frac{4 m_\varepsilon^2}{\varepsilon} \hat{V}(\gamma_k(t)).
    \end{align*}
    Hence
    \begin{align*}
        \int_{E_k \cap \{t\mid V(\gamma_k(t)) > h + \frac{3}{2}\varepsilon\}}
        \hat{V} (\gamma_k)\, dt
        &\leq \frac{\varepsilon}{4 m_\varepsilon^2}
        \int_0^1
        \langle \nabla \hat{V}(\gamma_k), \nabla V (\gamma_k) \rangle\, dt
        \to 0
        \quad (k \to \infty).
    \end{align*}
    On the other hand, $\hat{V}$ is bounded on
    $\{t\mid V(\gamma_k(t)) \in [h + \varepsilon, h + \frac{3}{2}\varepsilon]\}$,
    and $|E_k| \to 0$. Therefore,
    \begin{equation*}
        \int_{\{t\mid V(\gamma_k(t)) \in [h + \varepsilon, h + \frac{3}{2}\varepsilon]\}}
        \hat{V}(\gamma_k)dt \to 0.
    \end{equation*}
    Consequently,
    \begin{equation*}
        \int_{E_k} \hat{V}(\gamma_k) dt\to 0.
    \end{equation*}
    On $[0,1] \setminus E_k$, we have
    $V(\gamma_k(t)) = \hat{V}(\gamma_k(t))$.
    Since $\nabla V(r,z) \neq 0$ on $\partial \Omega$, there exists $\delta>0$
    such that $\nabla V(r,z) \neq 0$ whenever $V(r,z) \in (h - \delta, h)$.
    Hence, by \eqref{eq:lem1_conv}, if
    \begin{equation*}
        F_k := \{t \in [0,1] \mid V(\gamma_k(t)) < h - \delta/2\},
    \end{equation*}
    then $|F_k| \to 1$.
    Moreover,
    \begin{align*}
        &\limsup_{k \to \infty}
        \int_{[0,1] \setminus (E_k \sqcup F_k)}
        \hat{V}(\gamma_k(t))dt
        \leq \limsup_{k \to \infty}
        (1 - |E_k| -| F_k|)( h+\varepsilon)  \to 0,\\
        &\limsup_{k \to \infty}
        \int_{F_k} \hat{V}(\gamma_k(t))dt
        \leq h - \delta/2.
    \end{align*}
    Combining the above estimates, we obtain
    \begin{align*}
        &\liminf_{k \to \infty}
        \int_0^1 (h - \hat{V}(\gamma_k(t)))dt\\
        \geq&\ h
        - \limsup_{k \to \infty}
        \int_{[0,1] \setminus (E_k \sqcup F_k)}
        \hat{V}(\gamma_k(t))dt
        - \limsup_{k \to \infty}
        \int_{F_k} \hat{V}(\gamma_k(t))dt \\
        &\quad
        -\limsup_{k \to \infty}
        \int_{E_k} \hat{V}(\gamma_k(t))dt
        \geq \delta/2.
    \end{align*}
    On the other hand, since $\hat{I}(\gamma_k) \to c \in(0,\infty)$ and
    $\int_0^1|\gamma_k'|^2dt \to \infty$, we have
    \begin{equation*}
        \lim_{k \to \infty}
        \int_0^1 (h - \hat{V}(\gamma_k)) dt = 0,
    \end{equation*}
    which is a contradiction.
\end{proof}

\subsection{Proof of Theorem \ref{thm:main}}
We introduce a new coordinate $\tilde{r} = r - r^\ast$ using a point $(r^\ast,0)$ in the interior of the Hill region.
Accordingly, we also shift $\hat{\Gamma}$, $\hat{\Omega}$, and $\hat{I}$ by $r^\ast$,
while keeping the same notation.

By Lemma \ref{lem:critical_pt}, it suffices to find a positive critical value of $\hat{I}$.
Condition (I3) was proved in Lemma \ref{lem:ps}.
Condition (I1) follows from Lemma \ref{lem:boundary} and the inequality $\int_0^1 |\gamma'|^2dt \geq 0$.

Let
\begin{equation*}
    E_1 = \{\text{constant curves in } X\}.
\end{equation*}
Then
\begin{equation*}
    E_1^\perp
    =
    \left\{
        \tilde{\gamma}(t) = (\tilde{r}(t), z(t)) \in X
        \ \middle|\ 
        \int_0^1 \tilde r(t)dt = 0
    \right\}.
\end{equation*}
On $E_1 \cap \hat{\Gamma}$, the functional $\hat I$ is clearly equal to $0$.
Thus condition (I2) (i) is satisfied.

Take $\rho>0$ sufficiently small and consider
\begin{equation*}
    \partial \bar{B}_\rho \cap E_1^\perp.
\end{equation*}
For $\tilde{\gamma} \in \partial \bar{B}_\rho \cap E_1^\perp$, $\|\tilde \gamma\|_{L^\infty}$ is also sufficiently small.
Therefore, for curves in $\partial \bar{B}_\rho \cap E_1^\perp \subset \hat{\Gamma}$,
$V$ is uniformly bounded above by a constant strictly smaller than $h$.
In other words, $B(\tilde{\gamma})$ is bounded from below by a positive constant.

Moreover, since $\tilde \gamma \in E_1^\perp$, Poincar\'e's inequality can be applied.
That is, there exists $C>0$ such that
\begin{equation*}
    \| \tilde{\gamma}\|_{L^2} \leq C \|\tilde{\gamma}'\|_{L^2}.
\end{equation*}
Thus $A(\tilde{\gamma})$ is also uniformly bounded from below by a positive constant.
Consequently,
\begin{equation*}
    \inf_{\partial B_\rho \cap E_1^\perp} \hat{I} =: \alpha > 0,
\end{equation*}
and condition (ii) of (I2) is satisfied.

Let $z_0(t) = \sin \pi t$ and define
\begin{equation*}
    Q_{\hat{\Gamma}}
    =
    \{ (\tilde r, sz_0(t)) \mid \tilde r \in \mathbb{R},\ s \geq 0\}
    \cap \hat{\Gamma}.
\end{equation*}
In other words, we take $e = (0,z_0)$.
The boundedness of $Q_{\hat{\Gamma}}$ is clear from the compactness of the Hill region.

For $\tilde{\gamma}=(\tilde r,s z_0(t)) \in Q_{\hat{\Gamma}}$, the boundedness of $\hat{\Omega}$
implies that there exists $s_0>0$ such that $s < s_0$.
Hence, using $m=\min_\Omega V$, we obtain
\begin{align*}
    \hat{I}(\tilde{\gamma})
    &= \frac 12\int_0^1 s^2| z_0'(t)|^2 dt
    \int_0^1 (h - \hat V(r,sz_0(t))) dt\\
    &\leq \frac{s_0^2 \pi^2}{4} (h - m) < +\infty.
\end{align*}
The fact that $Q_{\hat{\Gamma}}$ is homeomorphic to
$(B_1 \cap E_1) \times [0,1)$ follows from (V3).
Indeed, $(r,sz_0) \in \hat{\Gamma}$ is equivalent to
\begin{equation*}
    r \in J,\quad 0 \leq s < \zeta(r),
\end{equation*}
and hence one can define a homeomorphism
\begin{equation*}
    J \times [0,1) \ni (r,\sigma)
    \mapsto
    (r, \sigma \zeta(r)z_0).
\end{equation*}
Furthermore, $J$ is homeomorphic to $B_1 \cap E_1$.

Thus condition (iii) of (I2) is also satisfied.
Therefore, the existence of a critical point with positive critical value follows.

\subsection{Morse index of the critical point}

The critical point obtained in Theorem \ref{thm:main} may be degenerate.
Nevertheless, even allowing for this possibility, its Morse index is at most
\begin{equation*}
    \dim(E_1)+1=2.
\end{equation*}
This follows from Corollary 10.5 of \cite{Ghoussoub93}.
Although the original result is stated in a more general setting involving, for instance, group actions,
we state here a simpler form sufficient for our purposes.
Moreover, we note that since the boundary of the domain of definition of the functional do not affect the minimax level the statement verifies.

Let $X$ be a Hilbert space, and let $I$ be a $C^2$ functional on $X$.

\begin{dfn}
    Let $X$ be a Hilbert space and $B \subset X$ be a closed set.
    We say that a family $\mathcal{F}$ is homotopic of dimension $n$ with boundary $B$ if there exist a compact subset $D$ of $\mathbb{R}^n$, containing a closed subset $D_0$, and a continuous map $\sigma:D_0 \to B$ such that
    \begin{equation*}
        \mathcal{F} = \{ A \subset X \mid A = f(D),\ \text{for some } f \in C(D,X)\text{ with } f|_{D_0}= \sigma \}.
    \end{equation*}
\end{dfn}

Define
\begin{equation*}
    c(I,\mathcal{F}) := \inf_{A \in \mathcal{F}} \max_{\gamma \in A} I(\gamma).
\end{equation*}

\begin{prop}
    Let $X$ be a Hilbert space, let $B \subset X$ be a closed set, and let
    $\mathcal{F}$ be a homotopic family of dimension $n$ with boundary $B$.
    Set $c = c(I,\mathcal{F})$, and assume that $\sup_B I < c$.
    Suppose that $I$ satisfies the Palais--Smale condition at level $c$ and that
    $I''$ is Fredholm on
    \begin{equation*}
        K_c:=\{ \gamma \in X \mid I(\gamma) = c,\ I'(\gamma) = 0\}.
    \end{equation*}
    Then there exists $\gamma \in K_c$ such that
    \begin{equation*}
        m(\gamma) \leq n.
    \end{equation*}
    where $m(\cdot)$ denotes the Morse index.
\end{prop}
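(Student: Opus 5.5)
This proposition is a simplified form of Corollary 10.5 of \cite{Ghoussoub93}. I would prove it by contradiction, using a finite-dimensional deformation that pushes the set $A$ off the critical set. Assume every $\gamma\in K_c$ has Morse index $m(\gamma)\ge n+1$. By (PS) at level $c$, $K_c$ is compact.

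\textbf{Step 1 (local structure).} Fix $\gamma\in K_c$. Since $I''(\gamma)$ is self-adjoint and Fredholm, $X$ splits orthogonally as $X=X^-_\gamma\oplus X^0_\gamma\oplus X^+_\gamma$, and $\dim X^-_\gamma = m(\gamma)\ge n+1$. By continuity of $I''$, there are a neighborhood $U_\gamma$ and a constant $\delta_\gamma>0$ such that $\langle I''(\xi)v,v\rangle\le -\delta_\gamma\|v\|^2$ for all $\xi\in U_\gamma$ and $v\in X^-_\gamma$. So along directions in $X^-_\gamma$, $I$ is uniformly strictly concave on $U_\gamma$. Cover $K_c$ by finitely many such neighborhoods, and let $U$ be their union.

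\textbf{Step 2 (deformation away from $K_c$).} Choose a sequence of sets $A_k\in\mathcal F$ with $\max_{A_k} I\to c$. Since $\sup_B I<c$, the standard deformation lemma (Lemma \ref{lem:deformation}, applied outside a small neighborhood $N\subset U$ of $K_c$) lets me assume two things. First, $A_k\cap[I\ge c-\epsilon_k]\subset N$. Second, the boundary $f(D_0)=\sigma(D_0)\subset B$ is never moved.

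\textbf{Step 3 (dimension count, the main obstacle).} Here $A_k=f_k(D)$ with $D\subset\mathbb R^n$. In a chart near $\gamma$, project $f_k$ onto the finite-dimensional space $X^-_\gamma$, which has dimension at least $n+1$. The image of an $n$-dimensional compact set under a continuous map into a space of dimension $\ge n+1$ can be perturbed to miss a point. Concretely, I would first replace $f_k$ locally by a piecewise-linear approximation, which costs only a small change in $\max I$. Its projection to $X^-_\gamma$ then has image of Lebesgue measure zero, so there is a small vector $w\in X^-_\gamma$ outside that image. Next, radially push the projected component away from $w$ toward the boundary of a small ball in $X^-_\gamma$, cutting off with a bump function supported in $U_\gamma$. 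By the strict concavity from Step 1, this push decreases $I$ by a definite amount $\eta>0$ at the points where $I$ was near $c$. The deformation stays in $\mathcal F$ because the cut-off vanishes near $B$: indeed $\sup_B I<c$, and $B$ is closed and at positive distance from the compact set $K_c$.

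\textbf{Step 4 (conclusion).} Perform the pushes of Step 3 successively over the finite cover from Step 1. The resulting set $\tilde A_k\in\mathcal F$ satisfies $\max_{\tilde A_k} I<c$ for large $k$, which contradicts the definition of $c$. Hence some $\gamma\in K_c$ has $m(\gamma)\le n$.

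The delicate part is Step 3: the successive local pushes must not reintroduce high values in neighborhoods already treated. I would handle this by ordering the charts and shrinking the parameters, so that each later push changes $I$ by less than half the gain obtained earlier.

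In our application we take $n=\dim E_1+1=2$ with $D=Q$, $D_0=\partial Q$ and $B=\partial Q$. The hypothesis $\sup_B I\le\alpha/2<c$ was already established in the proof of Theorem \ref{thm:linking}, and the Fredholm property holds because $\hat I''$ is a compact perturbation of a multiple of the identity at points where $B(\gamma)>0$.
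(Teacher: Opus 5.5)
The paper gives no proof of this proposition; it quotes Ghoussoub's Corollary 10.5. Your direct-deformation route differs from the standard one. That route perturbs so that the critical points near $K_c$ become nondegenerate (this is where Fredholmness enters), uses Morse coordinates in which the negative directions split off exactly, and passes to the limit by lower semicontinuity of the Morse index. As written, your key step fails.

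\textbf{The gap in Step 3.} Write $\xi=\gamma+p+q$ with $p\in X^-_\gamma$ and $q\perp X^-_\gamma$. Concavity in $p$ only gives $I(\gamma+p+q)\le M(q)-\tfrac{\delta_\gamma}{2}|p-p^*(q)|^2$, where $p^*(q)$ is the maximizer on the slice and $M(q)$ is the maximum. In general $p^*(q)\neq 0$ for $q\neq 0$, and $M(q)>c$ is possible because $I''(\gamma)\ge 0$ on $X^0_\gamma\oplus X^+_\gamma$. Where your bump function lies strictly between $0$ and $1$, the radial push away from the \emph{fixed} $w$ is only partial. A point whose $p$-component lies between $w$ and $p^*(q)$ is then carried toward, and even onto, the ridge.

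For a concrete case, take $I(p,q)=c-|p-q^2e|^2+q^6$ on $\mathbb{R}^2\times\mathbb{R}$ with $|e|=1$ and $n=1$. The only critical point is $0$, with $m(0)=2$, $p^*(q)=q^2e$ and $M(q)=c+q^6$. The points $(p,q)$ with $p=\bigl(q^2-\sqrt{q^6+2\epsilon_k}\bigr)e$ and $|q|$ in the cut-off zone have $I=c-2\epsilon_k$ and lie outside $N$, so an arc of $A_k$ may pass through them. By the intermediate value theorem, the partial push sends one of them to within $O(|w|)$ of $(q^2e,q)$, where $I\approx c+q^6>c$. This overshoot is fixed by the chart before $\epsilon_k$ and $w$ are chosen. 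So the argument already breaks for a single chart and an isolated (degenerate) critical point.

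The missing idea is to push relative to the ridge:
\begin{itemize}
\item Obtain $p^*(q)$ from $P\nabla I(\gamma+p+q)=0$ by the implicit function theorem, where $P$ is the projection onto an $(n+1)$-dimensional negative subspace.
\item Choose $w$ off the null image of the Lipschitz map $a\mapsto p(f_k(a))-p^*(q(f_k(a)))$.
\item Push away from $p^*(q)+w$. Along such rays $I$ can rise by at most $O(|w|^2)$.
\end{itemize}
Alternatively, after a tiny shift making $A_k$ avoid the ridge, use a cut-off flow along $-P\nabla I$, which never increases $I$.

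\textbf{Step 4 and smaller points.} Your remedy for interacting charts points the wrong way. If later pushes are made smaller so that they disturb earlier gains by less than half, their own gains shrink too. They then cannot repair damage done in earlier cut-off zones, and a point that an earlier partial push carries out of a later chart's full-push region is handled by no chart. What is needed is:
\begin{itemize}
\item a gain common to all charts;
\item a damage per push that tends to $0$ with $|w|$, so the $w$'s can be chosen last, after $\epsilon_k$;
\item a margin argument: $N$ lies well inside the union of the full-push regions, and a displacement $s$ along a ridge-adapted ray lowers $I$ by roughly $\delta s^2/2$, so any point pushed out of a later chart's region is already below $c$.
\end{itemize}
Two smaller points. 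If $m(\gamma)=\infty$, $X^-_\gamma$ is not finite-dimensional, so work with an $(n+1)$-dimensional negative subspace instead. Also, Step 2 needs the deformation lemma with a neighborhood of $K_c$ removed, not the paper's version, which assumes there are no critical values in $[a,b]$.
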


In the present case, we take
\begin{equation*}
    D=Q,\quad D_0 = \partial Q,\quad \sigma = \mathrm{id}.
\end{equation*}

Indeed, since the Palais--Smale condition has already been proved, it remains only to show that
$\hat{I}''$ is a Fredholm operator at positive critical points.

Let $\gamma_\ast \in X$ be the critical point obtained in Theorem \ref{thm:main}, with
$\hat{I}(\gamma_\ast)>0$.
Then
\begin{equation*}
    A(\gamma_\ast)>0,\quad B(\gamma_\ast)>0.
\end{equation*}
Moreover, by Lemma \ref{lem:critical_pt}, we have $\gamma_\ast([0,1]) \subset \bar{\Omega}$.
For $\xi_1=(r_1,z_1), \xi_2=(r_2,z_2) \in X$, we have
\begin{align*}
    &\hat{I}''(\gamma_\ast)[\xi_1,\xi_2] \\
    =& B(\gamma_\ast)\int_0^1 \langle \xi_1', \xi_2'\rangle \,dt
    -  \int_0^1 \langle \gamma_\ast', \xi_1' \rangle \,dt
    \int_0^1 \langle \nabla \hat V (\gamma_\ast), \xi_2 \rangle \,dt \\
    &-  \int_0^1 \langle \gamma_\ast' , \xi_2' \rangle \,dt
    \int_0^1 \langle \nabla \hat V (\gamma_\ast), \xi_1 \rangle \,dt
    - \frac12A(\gamma_\ast) \int_0^1 \hat V''(\gamma_\ast)[\xi_1, \xi_2] \,dt.
\end{align*}
Here we also denote by $\hat{I}''$ the bilinear form corresponding to the Hessian.
We write
\begin{equation*}
    \hat{I}''(\gamma_\ast)= B(\gamma_\ast) L + \frac12K.
\end{equation*}
That is,
\begin{align*}
    \langle L \xi_1, \xi_2 \rangle_X
    &= \int_0^1 \langle \xi_1', \xi_2' \rangle \,dt,\\
    \langle K \xi_1, \xi_2 \rangle_X
    &= - 2 \int_0^1 \langle \gamma_\ast', \xi_1' \rangle \,dt
    \int_0^1 \langle \nabla \hat V (\gamma_\ast), \xi_2 \rangle \,dt \\
    &\quad - 2 \int_0^1 \langle \gamma_\ast' , \xi_2' \rangle \,dt
    \int_0^1 \langle \nabla \hat V (\gamma_\ast), \xi_1 \rangle \,dt \\
    &\quad - A(\gamma_\ast) \int_0^1 \hat V''(\gamma_\ast)[\xi_1, \xi_2] \,dt.
\end{align*}

For $L$, recalling the definition of the inner product on $X$, we have
\begin{align*}
    \langle L\xi_1,\xi_2 \rangle_X
    &= \int_0^1\langle \xi_1', \xi_2' \rangle \,dt \\
    &= \int_0^1 r_1' r_2' \,dt + \int_0^1 z_1' z_2' \,dt \\
    &= \langle \xi_1, \xi_2 \rangle_X
    - \int_0^1 (r_1 r_2+ z_1z_2)\,dt.
\end{align*}
By the Rellich--Kondrachov theorem, the embedding
\begin{equation*}
    H^1([0,1], \mathbb{R}) \hookrightarrow L^2([0,1], \mathbb{R})
\end{equation*}
is compact.
Thus $L$ is the sum of the identity operator and a compact operator, and hence $L$ is Fredholm.

We next consider $K$.
The first and second terms in $K$ are products of linear functionals, and hence define finite-rank operators.
In particular, they are compact.
The third term in $K$ is also compact.
Indeed, since $\hat{V}''$ is bounded on $\bar{\Omega}$ and
\begin{equation*}
    \gamma_\ast([0,1]) \subset \bar{\Omega},
\end{equation*}
we have
\begin{equation*}
    \hat{V}''(\gamma_\ast(\cdot)) \in L^\infty([0,1], \mathcal{L}(\mathbb{R},\mathbb{R})).
\end{equation*}
Using again the compactness of the embedding $H^1 \hookrightarrow L^2$, together with the boundedness of its adjoint, we see that the third term is compact as well.

Consequently, $\hat{I}''$ is Fredholm.

\section{Morse index of the planar solution}

In this section, we consider whether the critical point obtained in Section 2 can coincide with the planar solutions.
In practice, this has to be checked by computations in each application.
However, we note that if the Morse index of the planar orbit is at least $3$, then it cannot coincide with the critical point obtained in Section 2.

We denote the half-period of a planar orbit by
\begin{equation*}
    \gamma_0(t) = (r_0(t),0) \in X.
\end{equation*}
Suppose that this orbit is obtained as a critical point of $\hat I$.
If it is a constant curve, then clearly $\hat I(\gamma_0)=0$, and hence it cannot coincide with the critical point obtained in Theorem \ref{thm:main}.

Assume that $\gamma_0$ is a nonconstant critical point satisfying $\hat{I}(\gamma_0)>0$.
By Lemma \ref{lem:critical_pt}, we have
\begin{equation*}
    r_0'(0)=r_0'(1)=0,
    \qquad
    \gamma_0([0,1]) \subset \bar{\Omega}.
\end{equation*}
Thus $\gamma_0(0), \gamma_0(1) \in \partial \Omega \cap \{z=0\}$, and $\gamma_0$ is a brake orbit.
We may assume that
\begin{equation*}
    r_0(0) < r_0(1),
    \qquad
    r_0'(t)>0 \quad \forall t \in (0,1).
\end{equation*}
Indeed, if $r_0(0)>r_0(1)$, then by reversing time we obtain the above situation without changing either the Morse index or the value of the functional.

If there exists $t_0 \in (0,1)$ such that $r_0'(t_0)=0$, then the restriction of the orbit to $[0,t_0]$ can be brought into the above situation by a change of time.
The same argument applies to the restriction to $[t_0,1]$; after further decomposing it if necessary and reversing time when needed, each piece becomes a brake orbit of the above type.
If a planar brake orbit is a concatenation of several brake arcs, then variations supported in one arc give a negative subspace of the same dimension as for that arc. Hence the Morse index of the concatenated orbit is at least the Morse index of each component brake arc. 

In what follows, since $\gamma_0([0,1]) \subset \bar{\Omega}$, we write everything in terms of $V$ rather than $\hat V$.
Set
\begin{equation*}
    \lambda_0
    :=
    \frac{A(\gamma_0)}{2B(\gamma_0)}
    =
    \frac{\int_0^1 |r_0'(t)|^2 \,dt}
    {2 \int_0^1 (h - V(r_0(t),0))\,dt}.
\end{equation*}
By Lemma \ref{lem:critical_pt}, if we introduce the physical time
\begin{equation*}
    s \in [0,\sqrt{\lambda_0}]
\end{equation*}
and define
\begin{equation*}
    \tilde r_0(s) = r_0(s/\sqrt{\lambda_0}),
\end{equation*}
then
\begin{align*}
    &\tilde r_0''(s) = -\frac{\partial V}{\partial r}(\tilde r_0(s)),\\
    &\frac12  \tilde r_0'(s)^2 + V( \tilde r_0(s)) = h,\\
    &\tilde r_0'(0) = \tilde r_0'(\sqrt{\lambda_0})=0,\quad
    \tilde r_0'(s) > 0\quad \forall s \in (0, \sqrt{\lambda_0}).
\end{align*}

By (V1), we have
\begin{equation*}
    \hat V_z (r,0) =  \hat V_{rz}(r,0) = 0.
\end{equation*}
Therefore, if we take variations
\begin{equation*}
    \xi=(u,0),\qquad \eta = (0,v) \in X,
\end{equation*}
then
\begin{align*}
    A''(\gamma_0)[\xi, \eta]
    &= 2\int_0^1 \langle \xi', \eta' \rangle \,dt = 0,\\
    B''(\gamma_0)[\xi, \eta]
    &= \int_0^1 V_{rz}(\gamma_0(t))u(t)v(t)\,dt = 0.
\end{align*}
Moreover, for the terms $A'(\gamma_0)[\xi] B'(\gamma_0)[\eta],\ A'(\gamma_0)[\eta] B'(\gamma_0)[\xi]$, we have
\begin{align*}
    B'(\gamma_0)[\eta]
    &= -\int_0^1 V_z(\gamma_0(t))v(t)\,dt = 0,\\
    A'(\gamma_0)[\eta]
    &= 2\int_0^1 \langle \gamma_0', \eta' \rangle \,dt = 0.
\end{align*}
Hence these terms also vanish.
Consequently,
\begin{equation*}
    \hat{I}''(\gamma_0)[(u,v),(u,v)]
    =
    \hat{I}''(\gamma_0)[(u,0),(u,0)]
    +
    \hat{I}''(\gamma_0)[(0,v),(0,v)].
\end{equation*}
Thus it suffices to study the Morse indices in the $r$-direction and the $z$-direction separately.

\subsection{Morse index in the $z$-direction}

We write variations in the $z$-direction as $\eta = (0,v),\ \eta_1 = (0,v_1),\ \eta_2 = (0,v_2) \in X$.
Then
\begin{align*}
    \hat I ''(\gamma_0)[\eta_1, \eta_2]
    &= \frac 12 A''(\gamma_0)[\eta_1, \eta_2]B(\gamma_0)
    + \frac12 A(\gamma_0)B''(\gamma_0)[\eta_1, \eta_2]\\
    &= B(\gamma_0) \int_0^1 v_1'(t) v_2'(t) \,dt
    - \frac12  A(\gamma_0)
    \int_0^1 V_{zz}(\gamma_0(t)) v_1(t) v_2(t) \,dt.
\end{align*}

We now change the variable of integration to the angular variable $\theta$.
We denote by $\bar v_1(\theta),\ \bar v_2(\theta) \in H_0^1([0,\theta_0], \mathbb{R}), \bar r_0(\theta) \in H^1([0, \theta_0], \mathbb{R})$ the functions obtained from $v_1(t)$, $v_2(t)$, and $r_0(t)$ after this change of variables where
\begin{equation*}
    \theta_0 
    =
    \int_0^{\sqrt{\lambda_0}} \frac{d\theta}{ds}\, ds
    =
    \int_0^{\sqrt{\lambda_0}}
    \frac{\omega}{\tilde r_0(s)^2}\, ds
    =
    \int_0^1 \frac{\omega \sqrt{\lambda_0}}{r_0(t)^2}\, dt
\end{equation*}
Using
\begin{equation*}
    \omega = r^2 \frac{d\theta}{ds},
    \qquad
    s = \sqrt{\lambda_0}t,
\end{equation*}
we obtain
\begin{align*}
    &\hat I ''(\gamma_0)[\eta_1, \eta_2]\\
    =& B(\gamma_0) \omega \sqrt{\lambda_0}
    \int_0^{\theta_0}
    \frac{1}{\bar r_0(\theta)^2}
    \bar v_1'(\theta) \bar v_2'(\theta)\, d \theta
    - \frac{1}{2\omega \sqrt{\lambda_0}} A(\gamma_0)
    \int_0^{\theta_0}
    \bar r_0(\theta)^2
    V_{zz}(\bar \gamma_0(\theta))
    \bar v_1(\theta) \bar v_2(\theta)\, d \theta\\
    =& \frac 1 \omega
    \sqrt{\frac{A(\gamma_0) B(\gamma_0)}{2}}
    \int_0^{\theta_0}
    \left(
        \frac{\omega^2}{\bar r_0(\theta)^2}
        \bar v_1'(\theta)\bar v_2'(\theta)
        -
        \bar r_0(\theta)^2
        V_{zz}(\bar \gamma_0(\theta))
        \bar v_1(\theta) \bar v_2(\theta)
    \right)d \theta\\
    =& \frac 1 \omega
    \sqrt{\frac{A(\gamma_0) B(\gamma_0)}{2}}
    \int_0^{\theta_0}
    \left[
        -\frac{d}{d \theta}
        \left(
            \frac{\omega^2}{\bar r_0(\theta)^2}
            \bar v_1'(\theta)
        \right)
        -
        \bar r_0(\theta)^2
        V_{zz}(\bar \gamma_0(\theta))
        \bar v_1(\theta)
    \right]
    \bar v_2(\theta)\, d \theta.
\end{align*}
Therefore, the sign is determined by
\begin{equation}\label{eq:z_second_variation}
    Q_z(\bar v)
    :=
    \int_0^{\theta_0}
    \left(
        \frac{\omega^2}{\bar r_0(\theta)^2} \bar v'(\theta)^2
        -
        \bar r_0(\theta)^2
        V_{zz}(\bar r_0(\theta),0)\bar v(\theta)^2
    \right)d\theta.
\end{equation}
Hence we have the following:

\begin{lem}\label{lem:z_morse}
    The Morse index of the planar solution $\gamma_0$ in the $z$-direction coincides with the number of negative eigenvalues of the Sturm--Liouville operator
    \begin{equation*}
        \mathcal L_z \bar v
        :=
        -\frac{d}{d\theta}
        \left(
            \frac{\omega^2}{\bar r_0(\theta)^2}\bar v'(\theta)
        \right)
        -
        \bar r_0(\theta)^2
        V_{zz}(\bar r_0(\theta),0)\bar v(\theta),
    \end{equation*}
    under the Dirichlet boundary condition
    \begin{equation*}
        \bar v(0)=\bar v(\theta_0)=0.
    \end{equation*}
\end{lem}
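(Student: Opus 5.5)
The plan is to identify the Morse index of $\hat I''(\gamma_0)$ restricted to the $z$-direction, $\{(0,v)\mid v\in H^1_0([0,1],\mathbb{R})\}$, with the index of the quadratic form $Q_z$ on $H^1_0([0,\theta_0],\mathbb{R})$. Then we use the standard spectral theory of regular Sturm--Liouville problems to count negative eigenvalues.

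First, the change of variables $t\mapsto\theta(t)=\int_0^t \omega\sqrt{\lambda_0}\,r_0(\tau)^{-2}\,d\tau$ is a $C^1$ diffeomorphism $[0,1]\to[0,\theta_0]$ with derivative bounded above and below by positive constants. This holds because $r_0$ is continuous and $\gamma_0([0,1])\subset\bar\Omega$, which lies at positive distance from the origin by (V3), so $r_0$ is bounded away from $0$. Composition with $\theta$ is therefore a linear isomorphism $H^1_0([0,1])\to H^1_0([0,\theta_0])$, $v\mapsto\bar v$. The computation preceding the lemma shows
\[
\hat I''(\gamma_0)[(0,v),(0,v)]=c\,Q_z(\bar v),\qquad c=\frac1\omega\sqrt{\frac{A(\gamma_0)B(\gamma_0)}{2}}>0,
\]
using $A(\gamma_0),B(\gamma_0)>0$ (since $\hat I(\gamma_0)>0$) and $\lambda_0=A/(2B)$. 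Hence the maximal dimension of a subspace on which the $z$-part of the Hessian is negative definite equals the maximal dimension of a subspace of $H^1_0([0,\theta_0])$ on which $Q_z$ is negative definite. This is the Morse index of $Q_z$.

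Second, I will show that the Morse index of $Q_z$ equals the number of negative Dirichlet eigenvalues of $\mathcal L_z$. The coefficient $p(\theta)=\omega^2/\bar r_0(\theta)^2$ is continuous and $C^1$ (since $r_0\in C^2$), and it is bounded below by a positive constant. The coefficient $q(\theta)=-\bar r_0^2V_{zz}(\bar r_0,0)$ is continuous by (V1). So $\mathcal L_z$ with Dirichlet conditions is a regular self-adjoint Sturm--Liouville operator on $L^2(0,\theta_0)$. It has compact resolvent, simple eigenvalues $\mu_1<\mu_2<\cdots\to\infty$, and an $L^2$-orthonormal eigenbasis $\{\phi_k\}$ lying in $H^1_0$, with $Q_z(\phi_j,\phi_k)=\mu_k\delta_{jk}$ after integration by parts. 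Expanding $\bar v=\sum a_k\phi_k$ gives $Q_z(\bar v)=\sum\mu_k a_k^2$, valid on $H^1_0$ by density and the closedness of the form.

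Let $N_-=\#\{k\mid\mu_k<0\}$, which is finite. The span of the corresponding $\phi_k$ is an $N_-$-dimensional subspace on which $Q_z<0$, so the index is at least $N_-$. Conversely, $Q_z\ge0$ on the $Q_z$-orthogonal complement $\overline{\mathrm{span}}\{\phi_k\mid\mu_k\ge0\}$, which has codimension $N_-$. Any subspace of dimension greater than $N_-$ must meet this complement nontrivially, so no such subspace is negative definite, and the index is at most $N_-$.

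The main obstacle I expect is justifying the spectral step rigorously in the weak setting: that the form domain is exactly $H^1_0$ and that the eigenfunction expansion diagonalizes $Q_z$ there. I would handle this by viewing $Q_z+M\|\cdot\|_{L^2}^2$, for large $M$, as an equivalent inner product on $H^1_0$ and applying the spectral theorem to the compact self-adjoint operator given by its inverse via Rellich's theorem. Combined with the isomorphism from the first step, this proves the lemma.
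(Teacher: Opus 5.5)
Your proposal is correct and follows the paper's route. You use the same reparametrization by the angle $\theta$ and the same positive prefactor $\frac{1}{\omega}\sqrt{A(\gamma_0)B(\gamma_0)/2}$ to reduce the $z$-part of $\hat I''(\gamma_0)$ to $Q_z$ on $H^1_0([0,\theta_0])$. You then identify its index with the negative Dirichlet eigenvalues of $\mathcal L_z$; the paper leaves that last step implicit after integrating by parts, while you justify it through the spectral decomposition, which is added rigor rather than a different argument.
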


\subsection{Morse index of $r$-direction}

We next consider variations in the $r$-direction.
Let $\xi=(u,0),\ \xi_1=(u_1,0),\ \xi_2=(u_2,0)\in X$.
Then
\begin{align*}
    \hat I''(\gamma_0)[\xi_1,\xi_2]
    =& B(\gamma_0)\int_0^1 u_1'(t)u_2'(t)\,dt \\
    & - \left( \int_0^1 r_0'(t)u_1'(t)\,dt \right)
    \left(\int_0^1 V_r(r_0(t),0)u_2(t)\,dt \right) \\
    & - \left( \int_0^1 r_0'(t)u_2'(t)\,dt \right)
    \left( \int_0^1 V_r(r_0(t),0)u_1(t)\,dt \right) \\
    &-\frac12 A(\gamma_0)
    \int_0^1 V_{rr}(r_0(t),0)u_1(t)u_2(t)\,dt .
\end{align*}

Since $\gamma_0$ is a critical point, we have
\begin{equation*}
    B(\gamma_0) r_0''(t) + \frac12 A(\gamma_0)V_r(r_0(t),0)=0.
\end{equation*}
Moreover, since $r_0'(0)=r_0'(1)=0$, for every $u\in H^1([0,1], \mathbb{R})$ we obtain
\begin{align*}
    \int_0^1 V_r(r_0(t),0)u(t)\,dt
    &=
    -\frac{2B(\gamma_0)}{A(\gamma_0)}
    \int_0^1 r_0''(t)u(t)\,dt \\
    &=
    \frac{2B(\gamma_0)}{A(\gamma_0)}
    \int_0^1 r_0'(t)u'(t)\,dt.
\end{align*}
Therefore, the quadratic form in the $r$-direction can be written as
\begin{align*}
    Q_r(u_1, u_2) :=& \hat I''(\gamma_0)[(u_1,0),(u_2,0)]\\
    =&B(\gamma_0)\int_0^1 u_1'(t) u_2'(t) \,dt
    - \frac{4B(\gamma_0)}{A(\gamma_0)}
    \left( \int_0^1 r_0'(t)u_1'(t) \,dt \right)
    \left( \int_0^1 r_0'(t)u_2'(t) \,dt \right)\\
    &- \frac12 A(\gamma_0)
    \int_0^1 V_{rr}(r_0(t),0)u_1(t)u_2(t)\,dt.
\end{align*}

\begin{lem}\label{lem:r_morse_two}
    Suppose that
    \begin{align*}
        I_0(\gamma_0) := \int_0^1 V_{rr}(r_0(t),0)\,dt >0.
    \end{align*}
    Furthermore, define
    \begin{align*}
        &I_1(\gamma_0) := \int_0^1V_{rr}(r_0(t),0)r_0(t)\,dt,\\
        &I_2(\gamma_0) := \int_0^1 V_{rr}(r_0(t),0)r_0(t)^2\,dt,
    \end{align*}
    and assume that
    \begin{align*}
        I_0(\gamma_0) (6 B(\gamma_0) + I_2(\gamma_0)) - (I_1(\gamma_0))^2 > 0.
    \end{align*}
    Then the Morse index of the planar solution $\gamma_0$ in the $r$-direction is at least $2$.
\end{lem}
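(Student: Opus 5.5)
The plan is to exhibit an explicit two-dimensional subspace of $H^1([0,1],\mathbb{R})$ on which the quadratic form $Q_r$ is negative definite. The Morse index in the $r$-direction is the maximal dimension of a subspace on which $Q_r(u,u)<0$ for $u\neq 0$, so this suffices. The hypotheses suggest which test functions to use. $I_0$, $I_1$, $I_2$ are the moments of $V_{rr}(r_0(t),0)$ against $1$, $r_0$, $r_0^2$, so I would take the span of $u_1 = 1$ and $u_2 = r_0(t)$.

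First I would check that these are admissible and independent. Both lie in $H^1([0,1],\mathbb{R})$, since there is no boundary condition in the $r$-component. Because $\gamma_0$ is nonconstant with $r_0'>0$ on $(0,1)$, the function $r_0$ is not a constant, so $1$ and $r_0$ are linearly independent.

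Next I would compute the Gram matrix of $Q_r$ on this basis, using the simplified expression for $Q_r$ derived above. This is where the critical-point relation $\int_0^1 V_r u\,dt = \frac{2B}{A}\int_0^1 r_0' u'\,dt$ has already been absorbed. For $u=1$ we have $u'=0$, so only the potential term survives:
\begin{equation*}
Q_r(1,1) = -\tfrac12 A(\gamma_0) I_0(\gamma_0), \qquad Q_r(1,r_0) = -\tfrac12 A(\gamma_0) I_1(\gamma_0).
\end{equation*}
For $u=r_0$ we have $\int_0^1 r_0'u'\,dt=\int_0^1 r_0'^2\,dt = A(\gamma_0)$. Hence
\begin{equation*}
Q_r(r_0,r_0) = BA - \frac{4B}{A}A^2 - \tfrac12 A I_2 = -\tfrac12 A\,(6B + I_2).
\end{equation*}
So on $\mathrm{span}\{1,r_0\}$ the form is represented by
\begin{equation*}
-\frac{A(\gamma_0)}{2}\begin{pmatrix} I_0 & I_1 \\ I_1 & 6B+I_2\end{pmatrix}.
\end{equation*}

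Finally, since $A(\gamma_0)>0$, this matrix is negative definite exactly when the inner matrix is positive definite. By Sylvester's criterion, that means $I_0>0$ and $I_0(6B+I_2)-I_1^2>0$, which are precisely the assumptions. Therefore $Q_r$ is negative definite on a two-dimensional subspace, and the Morse index in the $r$-direction is at least $2$.

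There is no serious obstacle. The only step needing care is the term $\frac{4B}{A}\bigl(\int r_0'u'\bigr)^2$. It is what produces the coefficient $6B$ (from $B-4B=-3B$, multiplied by $-2$ after factoring out $-A/2$), and its sign has to be tracked correctly.
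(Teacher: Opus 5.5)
Your proposal is correct and follows the paper's proof essentially verbatim. You use the same test functions $1$ and $r_0$, the same Gram entries $-\frac12 A I_0$, $-\frac12 A I_1$, $-\frac12 A(6B+I_2)$, and the same determinant (Sylvester) criterion; your separate linear-independence check is harmless but redundant, since negative definiteness of the Gram matrix already forces $1$ and $r_0$ to be independent.
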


This assumption is satisfied, by the Cauchy--Schwarz inequality, if
\begin{equation*}
    V_{rr}(r_0(t))>0 \quad \forall t \in [0,1].
\end{equation*}
However, since this condition is too strong, we verify Lemma \ref{lem:r_morse_two} in applications. 

\begin{proof}
    First, for the constant function $1$, we have
    \begin{equation*}
        Q_r(1,1)
        = -\frac12 A(\gamma_0)\int_0^1 V_{rr}(r_0(t),0)\,dt < 0.
    \end{equation*}
    Next,
    \begin{align*}
        Q_r(r_0, r_0)
        &= -3 A(\gamma_0) B(\gamma_0)
        - \frac12 A(\gamma_0)
        \int_0^1 V_{rr}(r_0(t),0)r_0(t)^2\,dt,\\
        Q_r(1,r_0)
        &= -\frac12 A(\gamma_0)
        \int_0^1 V_{rr}(r_0(t),0)r_0(t)\,dt.
    \end{align*}
    Since $Q_r(1,1)<0$, $Q_r$ is negative definite on
    $\operatorname{span}\{1, r_0\}$ if and only if
    \begin{align*}
        \det \begin{pmatrix}
            Q_r(1,1) & Q_r(1, r_0)\\
            Q_r(1, r_0) & Q_r(r_0, r_0)
        \end{pmatrix}
       > 0.
    \end{align*}
    We compute
    \begin{align*}
        &\det \begin{pmatrix}
            Q_r(1,1) & Q_r(1, r_0)\\
            Q_r(1, r_0) & Q_r(r_0, r_0)
        \end{pmatrix}\\
        =& \frac14 A(\gamma_0)^2
        \left[
            I_0(\gamma_0) \left(6 B(\gamma_0) + I_2(\gamma_0)\right)
            - \left(I_1(\gamma_0) \right)^2
        \right] >0.
    \end{align*}
    This proves the lemma.
\end{proof}

\section{Application 1: the isosceles three-body problem}

\subsection{Existence result}

We examine an application to the spatial isosceles three-body problem, where two bodies of equal mass move symmetrically with respect to the $z$-axis, while the third body is constrained to move only along the $z$-axis.

The existence of nontrivial relative periodic orbits in the spatial isosceles three-body problem via variational methods was proved by Offin and Cabral\cite{OffinCabral09}, and
one of the authors \cite{Shibayama09}.
In both works, such orbits are obtained as minimizers of the Lagrangian action functional under a fixed-period constraint.
In contrast to previous works, our approach establishes the existence of (quasi-)periodic solutions with prescribed energy and angular momentum, rather than prescribed period.

Let the masses be
\begin{equation*}
    m_1=m_2=m,\qquad m_3=\alpha m\qquad (\alpha>0),
\end{equation*}
and let $q_i\in\mathbb R^3$ denote the position of the $i$-th body.
We write the positions as
\begin{equation*}
    q_1=\left(x,y,-\sqrt{\frac{\alpha}{\alpha+2}} z\right),\quad
    q_2=\left(-x,-y,-\sqrt{\frac{\alpha}{\alpha+2}} z\right),\quad
    q_3=\left(0,0,2 \sqrt{\frac{1}{\alpha(\alpha+2)}} z \right).
\end{equation*}
Then
\begin{equation*}
    m q_1+m q_2+\alpha m q_3=0,
\end{equation*}
so the center of mass is fixed at the origin.
The Lagrangian is given by
\begin{equation*}
    L = m(\dot x^2+\dot y^2+\dot z^2)
    + \frac{m^2}{2\sqrt{x^2 + y^2 }}
    +
    \frac{2\alpha m^2}{\sqrt{x^2 + y^2 + \frac{\alpha+2}{\alpha}z^2}}.
\end{equation*}

Using cylindrical coordinates $(r,\theta,z)$, we obtain
\begin{equation*}
    L = m(\dot r^2+r^2\dot\theta^2+\dot z^2)
    + \frac{m^2}{2r}
    + \frac{2\alpha m^2}{\sqrt{r^2+\frac{\alpha+2}{\alpha}z^2}}.
\end{equation*}
After a suitable scaling, this can be written as
\begin{align}
    L &= \frac12 ( \dot r^2 + r^2 \dot \theta^2 + \dot z^2)
    + \frac{1}{2r}
    + \frac{2 \alpha }{\sqrt{r^2 + \frac{\alpha + 2}{\alpha }z^2}},\label{eq:isosceles-lagrangian}\\
    H &= \frac12 \left( p_r^2 + \frac{p_\theta^2}{r^2} + p_z^2\right)
    - \frac{1}{2r}
    - \frac{2\alpha }{\sqrt{r^2 + \frac{\alpha + 2}{\alpha }z^2}}\label{eq:isosceles-hamiltonian}.
\end{align}
Setting $p_\theta = \omega$, we have
\begin{equation*}
    H = \frac 12 (p_r^2 + p_z^2) + V(r,z),
    \quad
    V(r,z) =
    \frac{\omega^2}{2r^2}
    - \frac{1}{2r}
    - \frac{2\alpha }{\sqrt{r^2 + \frac{\alpha + 2}{\alpha }z^2}}.
\end{equation*}

The only critical point of this potential $V(r,z)$ is
\begin{equation*}
    \left(\frac{2\omega^2}{1 + 4 \alpha},0\right).
\end{equation*}
Moreover, for
\begin{equation*}
    h \in \left(
    -\frac{(1 + 4 \alpha)^2}{8 \omega^2},
    - \frac{1}{8 \omega^2}
    \right),
\end{equation*}
the Hill region is compact and simply connected.
Since for $z>0$,
\begin{equation*}
    \partial_zV(r,z) = \frac{2(\alpha + 2)z}{(r^2 + \frac{\alpha + 2}{\alpha}z^2)^{3/2}}>0,
\end{equation*}
condition (V3) is satisfied.
Conditions (V1) and (V2) are clearly satisfied.
In the next subsection, we show that this solution is not planar.
Then, we can apply Theorem \ref{thm:main} to the problem, and obtain the following:

\begin{thm}
    For any $\omega >0$, $h \in \bigl(-\frac{(1 + 4 \alpha)^2}{8 \omega^2}, - \frac{1}{8 \omega^2}\bigr)$, and $\alpha >0$, the isosceles three-body problem with Hamiltonian \eqref{eq:isosceles-hamiltonian} admits a non-planar (quasi-)periodic solution with energy $h$ and angular momentum $\omega$.
\end{thm}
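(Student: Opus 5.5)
Once (V1)--(V3) are checked (done above), Theorem~\ref{thm:main} produces a critical point $\gamma_\ast$ of $\hat I$ with $\hat I(\gamma_\ast)>0$ and Morse index at most $2$, via the Proposition of Section~2. The non-planar claim then reduces to showing that no planar solution can be $\gamma_\ast$. Constant curves have $\hat I=0$, so it is enough to treat a nonconstant planar critical point $\gamma_0=(r_0,0)$ with $\hat I(\gamma_0)>0$. By the discussion at the start of Section~3, it suffices to take $\gamma_0$ to be a single monotone brake arc from the inner to the outer turning point on the $r$-axis. The Hessian splits into $r$- and $z$-parts, so I aim to show that the total Morse index is at least $3$. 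The plan is to get at least $2$ from Lemma~\ref{lem:r_morse_two} and at least $1$ from Lemma~\ref{lem:z_morse}.

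For the $z$-direction, compute
\[
V_{zz}(r,0)=\frac{2(\alpha+2)}{r^{3}}>0 .
\]
By Lemma~\ref{lem:z_morse}, I need a test function $\bar v\in H_0^1([0,\theta_0])$ with $Q_z(\bar v)<0$. I would use $\bar v=\sin(\pi\theta/\theta_0)$ and compare the two terms. It is cleaner to use the Kepler-like structure: on the plane $z=0$ the radial motion is a Kepler problem with effective mass $\mu=\tfrac12+2\alpha$. Its orbit in the $\theta$ variable is the conic
\[
\frac{1}{r}=\frac{\mu}{\omega^2}(1+e\cos\theta),
\]
so a half brake arc (perihelion to aphelion) has $\theta_0=\pi$. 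The ratio $\omega^2/r^4$ against $2(\alpha+2)/r^3$ can then be expressed through $1+e\cos\theta$. The Rayleigh quotient with $\sin\theta$ gives negativity once $2(\alpha+2)>\mu$, i.e.\ $2\alpha+4>2\alpha+\tfrac12$, which always holds. So the $z$-index is at least $1$. It is likely even larger, but $1$ suffices.

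For the $r$-direction, I verify the hypotheses of Lemma~\ref{lem:r_morse_two}. Here
\[
V_{rr}(r,0)=\frac{3\omega^2}{r^4}-\frac{2\mu}{r^3},
\]
which is not positive everywhere, so the lemma's conditions must be checked directly. Again I would change variables to $\theta$, using $dt\propto r^2\,d\theta$ and $u=1/r=\frac{\mu}{\omega^2}(1+e\cos\theta)$. This turns $I_0, I_1, I_2$ into explicit trigonometric integrals over $[0,\pi]$ of powers of $1+e\cos\theta$, and $B(\gamma_0)$ similarly. For example, $I_0\propto\int_0^\pi(3\omega^2u^2-2\mu u)\,d\theta$, which is proportional to $\int_0^\pi\bigl(3(1+e\cos\theta)^2-2(1+e\cos\theta)\bigr)\,d\theta=\pi\bigl(1+\tfrac32e^2\bigr)>0$. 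I would then compute $I_1$, $I_2$ and $B$ as functions of $e\in(0,1)$ and check the determinant inequality $I_0(6B+I_2)-I_1^2>0$.

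The main obstacle I expect is this determinant inequality. The terms $I_1$ and $I_2$ involve $\int r\,d\theta$-type integrals, i.e.\ $\int(1+e\cos\theta)^{-k}\,d\theta$ for small $k$, which have closed forms in $\sqrt{1-e^2}$. Proving positivity uniformly in $e\in(0,1)$ may require a careful inequality argument or a Cauchy--Schwarz bound in which the $6B$ term absorbs the defect. Combining $r$-index $\ge2$ with $z$-index $\ge1$ gives index $\ge3>2$, so $\gamma_\ast$ is non-planar. Finally, Lemma~\ref{lem:critical_pt} and the $\Delta\theta$ discussion turn $\gamma_\ast$ into a (quasi-)periodic solution with energy $h$ and angular momentum $\omega$.
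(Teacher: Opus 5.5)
Your architecture is the paper's: Morse index $\le 2$ for the minimax point, reduction to one monotone brake arc, splitting into $r$- and $z$-parts, and the target of index $\ge 3$ for planar Kepler arcs. However, of the two computations that carry the proof, the $z$-direction one is wrong and the $r$-direction one is missing.

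\textbf{The $z$-direction.} Your test function $\sin(\pi\theta/\theta_0)=\sin\theta$ does not give $Q_z<0$ uniformly. With $1/\bar r_0=\frac{\mu}{\omega^2}(1+e\cos\theta)$ one finds exactly
\[
Q_z(\sin\theta)=\frac{\mu\pi}{16\,\omega^2}\bigl(3(1+4\alpha)e^2-28\bigr).
\]
This is negative only for $e^2<28/(3(1+4\alpha))$. The admissible energies, however, give every $e^2\in\bigl(0,1-(1+4\alpha)^{-2}\bigr)$; for example, with $\alpha=3$ the test fails for $e^2\in[28/39,168/169)$.

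The criterion $2(\alpha+2)>\mu$ that you quote is exactly what the paper's test function $\bar v=\bar r_0(\theta)\sin\theta$ produces. This function is the $z$-displacement obtained by tilting the orbit plane about the apsidal line. It is therefore a Jacobi field of the comparison operator in which $2(\alpha+2)/\bar r_0$ is replaced by $\mu/\bar r_0$: indeed $-\bigl(\frac{\omega^2}{\bar r_0^2}\bar v'\bigr)'=\mu\sin\theta=\frac{\mu}{\bar r_0}\bar v$. Hence $\mathcal L_z\bar v=(\mu-2(\alpha+2))\sin\theta$, and
\[
Q_z(\bar v)=\Bigl(1-\frac{2(\alpha+2)}{\mu}\Bigr)\omega^2\int_0^\pi\frac{\sin^2\theta}{1+e\cos\theta}\,d\theta=-\frac{7\omega^2\pi}{(1+4\alpha)e^2}\bigl(1-\sqrt{1-e^2}\bigr)<0 .
\]
This is equivalent to the total-derivative identity the paper uses.

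\textbf{The $r$-direction.} You stop before checking the determinant condition of Lemma~\ref{lem:r_morse_two}. Without it you only have $Q_r(1,1)<0$, i.e.\ $r$-index $\ge 1$, so the total bound $3$ is not reached. The condition is a closed-form computation, not a delicate inequality. Set $\rho=\sqrt{1-e^2}$ and use
\[
dt=\frac{\rho^3\,d\theta}{\pi(1+e\cos\theta)^2},\qquad V_{rr}(\bar r_0,0)=\frac{\mu^4}{\omega^6}(1+e\cos\theta)^3(1+3e\cos\theta).
\]
The integrands for $I_0,I_1,I_2$ then reduce to $(1+e\cos\theta)(1+3e\cos\theta)$, $1+3e\cos\theta$ and $3-\frac{2}{1+e\cos\theta}$, which give
\[
I_0=\frac{\mu^4\rho^3}{2\omega^6}(5-3\rho^2),\quad I_1=\frac{\mu^3\rho^3}{\omega^4},\quad I_2=\frac{\mu^2\rho^3}{\omega^2}\Bigl(3-\frac2\rho\Bigr),\quad B(\gamma_0)=\frac{\mu^2\rho^2(1-\rho)}{2\omega^2}.
\]
The key step is the cancellation $6B+I_2=\mu^2\rho^2/\omega^2$. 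After it,
\[
I_0(6B+I_2)-I_1^2=\frac{\mu^6\rho^5}{2\omega^8}(1-\rho)(5+3\rho)>0
\]
for every $e\in(0,1)$. With these two corrections your argument becomes the paper's proof.
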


\subsection{Planar solutions}

In the isosceles three-body problem, planar solutions are Keplerian orbits.
Indeed,
\begin{align*}
    \bar r_0(\theta)
    =
    \frac{2 \omega^2}{(1 + 4 \alpha ) (1 + e \cos \theta)},
    \quad
    e = \sqrt{1 + \frac{8 h \omega^2}{(1 + 4 \alpha)^2}}
\end{align*}
is a solution.

\begin{lem}
    In the isosceles three-body problem, the Morse index of the planar solution $\gamma_0$ is at least $3$.
\end{lem}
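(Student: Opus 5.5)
The plan is to use the orthogonal splitting of $\hat I''(\gamma_0)$ into an $r$-block and a $z$-block established at the start of Section 3. I will show that the $r$-direction contributes at least $2$ negative directions, by Lemma \ref{lem:r_morse_two}, and the $z$-direction at least $1$, by Lemma \ref{lem:z_morse}; the two together give index at least $3$. By the concatenation remark in Section 3, it suffices to treat a single monotone brake arc. For the conic $\bar r_0(\theta)$ this is the arc from pericenter to apocenter, so $\theta\in[0,\pi]$ and $\theta_0=\pi$.

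Throughout, write $k=(1+4\alpha)/(2\omega^2)$, so that $1/\bar r_0=k(1+e\cos\theta)$ and $V(r,0)=\frac{\omega^2}{2r^2}-\frac{1+4\alpha}{2r}$.

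\emph{$r$-direction.} Here $V_{rr}(r,0)=3\omega^2/r^4-(1+4\alpha)/r^3$. All integrals in Lemma \ref{lem:r_morse_two} are converted to $\theta$ via $dt=\bar r_0^2\,d\theta/(\omega\sqrt{\lambda_0})$.
\begin{enumerate}[label=(\roman*)]
\item Up to a positive factor, $I_0\propto\int_0^\pi\bigl(3\omega^2k^2(1+e\cos\theta)^2-2\omega^2k^2(1+e\cos\theta)\bigr)d\theta=\omega^2k^2\pi\bigl(1+\tfrac32e^2\bigr)>0$.
\item Similarly $I_1$ and $I_2$ become elementary integrals of powers of $(1+e\cos\theta)^{-1}$ over $[0,\pi]$.
\item $B(\gamma_0)$ is computed from energy conservation, $h-V=\frac12\tilde r_0'^2$, again in $\theta$.
\end{enumerate}
I then need $I_0(6B+I_2)-I_1^2>0$. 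If a direct evaluation is messy, I will fall back on Cauchy--Schwarz. The positive part of $V_{rr}$ gives $I_0I_2\ge I_1^2$ up to the negative-part contribution, and I would try to show that the term $6BI_0$ absorbs that defect for all $e\in[0,1)$. As a consistency check, at $e=0$ the arc degenerates and everything is explicit.

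\emph{$z$-direction.} Here $V_{zz}(r,0)=2(\alpha+2)/r^3$, so
\[
Q_z(\bar v)=\int_0^\pi\Bigl(\frac{\omega^2}{\bar r_0^2}\bar v'^2-\frac{2(\alpha+2)}{\bar r_0}\bar v^2\Bigr)d\theta .
\]
It suffices to exhibit one $\bar v\in H^1_0(0,\pi)$ with $Q_z(\bar v)<0$. The naive choice $\bar v=\sin\theta$ gives $\frac{(1+4\alpha)\pi}{8\omega^2}\bigl[\tfrac34(1+4\alpha)e^2-7\bigr]$, which is negative only for small $e$. So I would instead use the Kepler-adapted test function $\bar v=\bar r_0\sin\theta$, i.e. substitute $\bar v=\bar r_0 w$. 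This mimics the exact neutral mode, a tilt of the orbital plane, in the pure Kepler case, where $V_{zz}=V_r/r$. After substituting and integrating by parts using the orbit equation for $1/\bar r_0$, $Q_z$ should reduce to $\int(\ldots)w'^2$ plus a potential term proportional to the excess $2(\alpha+2)-(1+4\alpha)/2>0$ of the third-body attraction over the Kepler part. That term should be strictly negative for $w=\sin\theta$, which would give index at least $1$ uniformly in $e$.

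The main obstacle is this $z$-direction estimate uniformly in $e\in[0,1)$, together with verifying the determinant inequality of Lemma \ref{lem:r_morse_two} for all $e$. For the $z$-estimate I would try $\bar r_0\sin\theta$ first; if that fails, I would use a Sturm comparison with the explicit Kepler Jacobi field.
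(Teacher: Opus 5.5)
Your strategy is the paper's: the $\hat I''$-orthogonal splitting into $r$- and $z$-blocks, the test function $\bar r_0\sin\theta$ on the pericenter--apocenter arc for the $z$-block, and Lemma~\ref{lem:r_morse_two} on $\operatorname{span}\{1,r_0\}$ for the $r$-block. Your $z$-computation, which you only sketch, does go through. Set $\mu=(1+4\alpha)/2$ and $u=1/\bar r_0$, which satisfies $u''+u=\mu/\omega^2$. The substitution $\bar v=\bar r_0 w$ and one integration by parts give
\[
Q_z(\bar r_0 w)=\omega^2\int_0^\pi\bigl(w'^2-w^2\bigr)\,d\theta-\frac72\int_0^\pi \bar r_0\,w^2\,d\theta .
\]
So $w=\sin\theta$ gives $Q_z<0$ for every $e$ without evaluating any integral. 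This is a cleaner route than the paper's. The paper evaluates both terms explicitly, via an exact-derivative identity, and obtains the same value $-\frac{7\omega^2\pi}{(1+4\alpha)e^2}\bigl(1-\sqrt{1-e^2}\bigr)$.

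The gap is the $r$-direction. The inequality $I_0(6B+I_2)-I_1^2>0$ is the entire content of that half, and you compute only $I_0$ (correctly) before deferring.

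Your Cauchy--Schwarz fallback would not work as stated. Along the orbit $V_{rr}(\bar r_0(\theta),0)=\frac{\mu^4}{\omega^6}(1+e\cos\theta)^3(1+3e\cos\theta)$, which is negative near apocenter once $e>1/3$. Moreover $I_2$ itself is negative once $e>\sqrt5/3$, so the negative part is not a small defect that $6BI_0$ could absorb. Your target ``for all $e\in[0,1)$'' also fails at $e=0$: there $r_0$ is constant, $1$ and $r_0$ are dependent, and the determinant vanishes. This is harmless, since $h$ in the open interval forces $0<e<1$.

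What is needed is the direct evaluation, which is elementary. Put $\rho=\sqrt{1-e^2}$ and use $dt=\rho^3\,d\theta/\bigl(\pi(1+e\cos\theta)^2\bigr)$. The integrands for $I_0,I_1,I_2$ then become multiples of $(1+e\cos\theta)(1+3e\cos\theta)$, $1+3e\cos\theta$ and $(1+3e\cos\theta)/(1+e\cos\theta)$, which gives
\[
I_0=\frac{\mu^4\rho^3}{2\omega^6}(5-3\rho^2),\qquad I_1=\frac{\mu^3\rho^3}{\omega^4},\qquad I_2=\frac{\mu^2\rho^3}{\omega^2}\Bigl(3-\frac{2}{\rho}\Bigr),\qquad B=\frac{\mu^2\rho^2(1-\rho)}{2\omega^2}.
\]
Hence $6B+I_2=\mu^2\rho^2/\omega^2$ and
\[
I_0(6B+I_2)-I_1^2=\frac{\mu^6\rho^5}{2\omega^8}(1-\rho)(5+3\rho)>0\qquad(0<\rho<1).
\]
This is exactly what the paper verifies. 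With this computation supplied, your proof is complete.
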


\begin{proof}
    We first show that the Morse index in the $z$-direction is at least $1$.
    In the present case, $\theta_0=\pi$ and
    \begin{equation*}
        V_{zz}(r,0)=\frac{2(\alpha+2)}{r^3}.
    \end{equation*}
    Take
    \begin{equation*}
        \bar v(\theta):=\bar r_0(\theta)\sin\theta
        =
        \frac{2\omega^2}{1+4\alpha}
        \frac{\sin\theta}{1+e\cos\theta}.
    \end{equation*}
    Then
    \begin{align*}
        &\bar v(0)=\bar v(\pi)=0,\\
        &\bar v'(\theta)
        =
        \frac{2\omega^2}{1+4\alpha}
        \frac{\cos\theta+e}{(1+e\cos\theta)^2}.
    \end{align*}
    Therefore,
    \begin{align*}
        Q_z(\bar v)
        &=
        \int_0^\pi
        \left(
        \frac{\omega^2}{\bar r_0(\theta)^2}\bar v'(\theta)^2
        - \frac{2(\alpha+2)}{\bar r_0(\theta)}\bar v(\theta)^2
        \right)d\theta \\
        &=
        \omega^2 \int_0^\pi
        \left(
        \frac{(\cos\theta+e)^2}{(1+e\cos\theta)^2}
        - \frac{4(\alpha+2)}{1+4\alpha}
        \frac{\sin^2\theta}{1+e\cos\theta}
        \right)d\theta .
    \end{align*}
    Since
    \begin{align*}
        \int_0^\pi \left( \frac{(\cos\theta+e)^2}{(1+e\cos\theta)^2} - \frac{\sin^2\theta}{1+e\cos\theta}\right)\, d \theta  
        &= \int_0^\pi \frac{d}{d \theta} \left( \frac{\sin \theta (\cos \theta + e)}{1 + e \cos \theta} \right)\, d \theta = 0
    \end{align*}
    we have
    \begin{align*}
        \int_0^\pi\frac{(\cos\theta+e)^2}{(1+e\cos\theta)^2}d\theta
        =
        \int_0^\pi\frac{\sin^2\theta}{1+e\cos\theta} d\theta
        =
        \frac{\pi}{e^2}\bigl(1-\sqrt{1-e^2}\bigr).
    \end{align*}
    Therefore, we obtain
    \begin{equation*}
        Q_z(\bar v)
        =
        -\frac{7 \omega^2 \pi}{(1 + 4 \alpha) e^2}
        \bigl(1-\sqrt{1-e^2}\bigr) < 0.
    \end{equation*}
    Thus the Morse index in the $z$-direction is at least $1$.

    We next show that the Morse index in the $r$-direction is at least $2$.
    We use Lemma \ref{lem:r_morse_two}.
    Set
    \begin{equation*}
        \mu = \frac{1 + 4\alpha}{2},
        \quad
        \rho=\sqrt{1-e^2}.
    \end{equation*}
    Then, we obtain
    \begin{align*}
        V_{rr}(\bar r_0(\theta),0) &= \frac{3 \omega^2}{r_0(\theta)^4} - \frac{1 + 4 \alpha}{r_0(\theta)^3} = \frac{3 \omega}{r^4} - \frac{2\mu}{r^3} = \frac{\mu^4}{\omega^6}(1 + e \cos \theta)^3(1+3e \cos\theta) ,\\
        \sqrt{\lambda_0} &= \int_0^\pi \frac{ds}{d\theta} d\theta = \int_0^\pi \frac{\bar r_0(\theta)^2}{\omega}\, d \theta = \int_0^\pi \frac{\omega^3}{\mu^2 (1 + e \cos \theta)^2} \, d\theta = \frac{\omega^3 \pi}{\mu^2 \rho^3}\\
        dt &=  \frac{\rho^3}{\pi (1 + e \cos \theta)^2} d \theta 
    \end{align*}
    and 
    \begin{align*}
        I_0(\gamma_0)
        &= \int_0^1 V_{rr}(r_0(t),0)\,dt 
        = \int_0^\pi V_{rr}(\bar r_0(\theta),0) \frac{\rho^3}{\pi (1 + e \cos \theta)^2} d \theta  
        = \frac{\mu^4\rho^3}{2\omega^6}(5-3\rho^2),\\
        I_1(\gamma_0)
        &= \int_0^1 V_{rr}(r_0(t),0)r_0(t)\,dt = \int_0^\pi V_{rr}(\bar r_0(\theta),0) \frac{\omega^2 \rho^3}{\mu \pi (1 + e \cos \theta)^3} d \theta  
        =\frac{\mu^3\rho^3}{\omega^4},\\
        I_2(\gamma_0)
        &= \int_0^1 V_{rr}(r_0(t),0)r_0(t)^2\,dt = \int_0^\pi V_{rr}(\bar r_0(\theta),0) \frac{\omega^4 \rho^3}{\mu^2 \pi (1 + e \cos \theta)^4} d \theta
        =\frac{\mu^2\rho^3}{\omega^2}\left(3-\frac{2}{\rho}\right),\\
        B(\gamma_0)
        &= \int_0^1 (h - V(r_0(t),0))dt
        = - \frac{\mu^2 \rho^2}{2\omega^2}- \int_0^\pi  \frac{\omega^2}{2\bar r_0(\theta)^2} - \frac{\mu}{\bar r_0(\theta)} \,d\theta
        = \frac{\mu^2\rho^2(1-\rho)}{2\omega^2}.
    \end{align*}
    In particular,
    \begin{equation*}
        I_0(\gamma_0)>0.
    \end{equation*}
    Furthermore,
    \begin{align*}
        &I_0(\gamma_0)\bigl(6B(\gamma_0)+I_2(\gamma_0)\bigr)
        -I_1(\gamma_0)^2\\
        &\qquad =
        \frac{\mu^6\rho^5}{2\omega^8}(1-\rho)(5+3\rho)>0.
    \end{align*}
    Therefore, by the lemma for the $r$-direction, the Morse index in the $r$-direction is at least $2$.

    Combining the two estimates, the total Morse index of a planar solution is at least $3$.
\end{proof}

\section{Application 2: The spatial anisotropic Kepler problem}
A Hamiltonian system with Hamiltonian
\begin{equation}\label{eq:anisotropic-kepler}
    H = \frac 12 \left( p_r^2 +  \frac{p_\theta^2}{r^2} + p_z^2 \right)
    - \frac{1}{\sqrt{r^2 + (1 + \beta) z^2}},\quad (\beta >-1) 
\end{equation}
is called the spatial anisotropic Kepler problem.

The anisotropic Kepler problem is introduced by Gutzwiler to model an electron in semiconductors with donor impurities (\cite{Gutzwiller73}).

Periodic orbits in the problem have recently been studied by various methods.
Guirao, Llibre, and Vera \cite{GuiraoLlibreVera13} studied the problem as a special case of the perturbed spatial Kepler problem and showed the existence of two $2\pi$-periodic unstable orbits with zero third angular momentum on each negative energy surface by averaging theorem. 
Li and Liu \cite{LiLiu21} studied the problem with anisotropic perturbations.
Hu, Ou and Qiao \cite{HuOuQiao26}, using contact geometry, showed that after reducing the system modulo rotational symmetry around the $z$-axis, there exist infinitely many periodic orbits on any fixed compact and regular reduced energy surface for $\beta \in (-1, 0]$.
One of the authors \cite{Sakaguchi} show the existence of periodic solutions for $\beta > 0$ using variational minimizing methods based on the action functional, in which the period is fixed.

We fix the angular momentum at $\omega$.
For
\begin{align*}
    V(r,z) &= \frac{\omega^2}{2r^2}
    - \frac{1}{\sqrt{r^2 + (1 + \beta)z^2}},
\end{align*}
if
\begin{equation*}
    h \in \left(-\frac{1}{2 \omega^2}, 0\right),
\end{equation*}
then the Hill region is compact and simply connected, and condition (V3) is also satisfied since $\partial_z V(r,z)>0$ if $z > 0$.
Moreover, the only critical point of $V$ is $(\omega^2,0)$.

Here again, planar solutions are Keplerian orbits.
As in the isosceles three-body problem, the Morse index in the $r$-direction is larger than or equal to $2$.
We now compute the Morse index in the $z$-direction.
The planar solution can be written as
\begin{equation*}
    \bar r_0(\theta) = \frac{\omega^2}{1 + e \cos \theta},
    \quad 0 < e < 1.
\end{equation*}
Since
\begin{equation*}
    V_{zz}(r,0) = \frac{1+\beta}{r^3},
\end{equation*}
we have
\begin{align*}
    Q_z(\bar v)
    &=
    \int_0^\pi
    \left(
        \frac{\omega^2}{\bar r_0(\theta)^2}\bar v'(\theta)^2
        -
        \frac{1 + \beta}{\bar r_0(\theta)}\bar v(\theta)^2
    \right)d\theta .
\end{align*}
Taking
\begin{equation*}
    \bar v(\theta) = \bar r_0(\theta)\sin\theta,
\end{equation*}
we obtain
\begin{align*}
    Q_z(\bar v)
    &=
    \omega^2 \int_0^\pi
    \left(
        \frac{(e + \cos \theta)^2}{(1 + e\cos \theta)^2}
        -
        \frac{(1 + \beta) \sin ^2 \theta}{1 + e \cos \theta}
    \right)d \theta\\
    &=
    -\frac{\beta \omega^2\pi}{e^2}
    \left(1 - \sqrt{1 - e^2}\right).
\end{align*}
Therefore, if $\beta>0$, then the Morse index in the $z$-direction is at least $1$.
It follows that, for $\beta>0$, the planar solution cannot coincide with the relative periodic orbit obtained by the minimax method.
Therefore, we obtain the following:

\begin{thm}
    For any $\omega >0$, $h \in (-\frac{1}{2\omega^2},0)$, and $\beta>0$, the spatial anisotropic Kepler problem with Hamiltonian \eqref{eq:anisotropic-kepler}  admits a non-planar (quasi-)periodic solution with energy $h$ and angular momentum $\omega$.    
\end{thm}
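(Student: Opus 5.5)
The argument parallels the isosceles case: apply Theorem \ref{thm:main} and show that every trivial planar solution has Morse index at least $3$. By the Morse index bound of the preceding section, such a solution then cannot be the minimax critical point.

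\textbf{Hypotheses (V1)--(V3).} (V1) and (V2) are immediate on $\{r>0\}$, since $V$ is smooth there and even in $z$. For (V3), take $h_-=-\frac{1}{2\omega^2}$, which is the value of $V$ at its unique critical point $(\omega^2,0)$, and $h_+=0$. For $h$ in this interval, $\Omega_h$ is a compact sublevel set of a function with a single nondegenerate minimum and no other critical points. Hence it is a topological disc with $\nabla V\neq0$ on its boundary, and it avoids the origin because of the centrifugal term. Since $\partial_zV>0$ for $z>0$, each vertical line through $J$ meets $\partial\Omega_{h+2\varepsilon}\cap\{z>0\}$ exactly once. This gives the continuous graph function $\zeta$ (continuity follows from the implicit function theorem), where $\varepsilon$ is small enough that $h+2\varepsilon<0$. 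Theorem \ref{thm:main} then yields a critical point $\gamma_\ast$ with $\hat I(\gamma_\ast)>0$ and Morse index at most $2$.

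\textbf{Excluding planar solutions.} Suppose $\gamma_\ast=\gamma_0$ is planar. It is nonconstant because $\hat I>0$, so it is a brake orbit, and after decomposing into brake arcs as in Section 3 it suffices to treat one arc. The planar reduced motion is the one-dimensional Kepler radial motion, so the arc runs from perihelion to aphelion: $\bar r_0=\omega^2/(1+e\cos\theta)$ on $[0,\pi]$, so $\theta_0=\pi$. In the $z$-direction, the computation above gives $Q_z(\bar r_0\sin\theta)<0$ when $\beta>0$. By Lemma \ref{lem:z_morse}, this contributes index at least $1$.

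\textbf{The $r$-direction (main obstacle).} For the $r$-direction, I would apply Lemma \ref{lem:r_morse_two}. The key observation is that the planar restriction $V(r,0)=\frac{\omega^2}{2r^2}-\frac1r$ is exactly the isosceles potential with $\mu=\frac{1+4\alpha}{2}$ replaced by $\mu=1$. So $I_0,I_1,I_2,B$ are given by the same formulas with $\mu=1$ and $\rho=\sqrt{1-e^2}$. This yields $I_0>0$ and
\[
I_0(6B+I_2)-I_1^2=\frac{\rho^5}{2\omega^8}(1-\rho)(5+3\rho)>0,
\]
so the $r$-direction index is at least $2$. This is the step needing care. I would rederive the integrals with the substitution $dt=\rho^3\,d\theta/(\pi(1+e\cos\theta)^2)$ rather than trust the $\mu$-scaling blindly, and use standard integrals $\int_0^\pi(1+e\cos\theta)^{k}\,d\theta$ for small integers $k$.

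\textbf{Conclusion.} Since the Hessian splits orthogonally into its $r$- and $z$-parts, the indices add, and the planar solution has index at least $3>2$. Therefore $\gamma_\ast$ is nonplanar. Lemma \ref{lem:critical_pt} turns it into a periodic solution of the reduced system, which lifts to a relative (quasi-)periodic orbit with energy $h$ and angular momentum $\omega$.
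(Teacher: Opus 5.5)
Your proposal is correct and follows essentially the same route as the paper: you verify (V1)--(V3) for $h\in(-\frac{1}{2\omega^2},0)$, then rule out planar critical points by showing their Morse index is at least $3$, which contradicts the index bound $2$ for the minimax critical point. For this you use the test function $\bar v=\bar r_0\sin\theta$ in the $z$-direction, giving $Q_z(\bar v)=-\frac{\beta\omega^2\pi}{e^2}(1-\sqrt{1-e^2})<0$, and Lemma \ref{lem:r_morse_two} in the $r$-direction; your reading of the planar potential as the isosceles one with $\mu=1$ is exactly what the paper's ``as in the isosceles three-body problem'' means, and the resulting value $\frac{\rho^5}{2\omega^8}(1-\rho)(5+3\rho)>0$ is correct.
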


\section*{Acknowledgments}
M. S. was supported by JSPS KAKENHI Grant Number JP23K25778.

\section*{Data availability statement}
No datasets were generated or analyzed during the current study.

\section*{Conflict of interest}
The authors declare that they have no conflict of interest.

\bibliography{list}
\bibliographystyle{abbrv}

\end{document}